\tikzstyle{vertex}=[circle ,draw, inner sep=0pt, minimum size=6pt]
\newcommand{\Bc}{\mathcal{B}}
\newcommand{\ZZ}{\mathbb{Z}}
\newcommand{\RR}{\mathbb{R}}
\newcommand{\bb}{\mathbf{b}}
\newcommand{\eb}{\mathbf{e}}
\newcommand{\qb}{\mathbf{q}}
\newcommand{\xb}{\mathbf{x}}
\newcommand{\yb}{\mathbf{y}}
\newcommand{\bigfrac}[2]{
\left\lfloor\frac{#1}{#2}\right\rfloor
}
\def\opn#1#2{\def#1{\operatorname{#2}}} 
\opn\conv{conv} \opn\dep{depth} \opn\Spec{Spec} \opn\cone{cone} \opn\ini{in} \opn\codeg{codeg} \opn\deg{deg}
\opn\Graph{Graph} \opn\sign{sign} \opn\Ehr{Ehr} \opn\rank{rank} \opn\type{type} \opn\reg{reg} \opn\core{core}
\opn\Hilb{Hilb} \opn\Indeg{Indeg} \opn\link{link} \opn\Tor{Tor} \opn\MNF{MNF} \opn\Stab{Stab}
\def\mindex{\mathrm{m\mbox{-}index}}
\newtheorem{thm}{Theorem}[section]
\newtheorem{cor}[thm]{Corollary}
\newtheorem{lem}[thm]{Lemma}
\newtheorem{prop}[thm]{Proposition}
\newtheorem{q}[thm]{Question}
\theoremstyle{definition}
\theoremstyle{remark}
\newtheorem{rem}[thm]{Remark}
\definecolor{suprise}{rgb}{0.0, 1, 1}
\begin{document}

\title{On the magic positivity of Ehrhart Polynomials of dilated polytopes}
\author{Masato Konoike}

\address[M. Konoike]{Department of Pure and Applied Mathematics, Graduate School of Information Science and Technology, Osaka University, Suita, Osaka 565-0871, Japan}
\email{kounoike-m@ist.osaka-u.ac.jp}

\subjclass{Primary: 05A15, Secondary: 05A10, 52B20} 
\keywords{Ehrhart polynomial, dilated polytope, magic positive}

\maketitle
\begin{abstract} 
A polynomial $f(x)$ of degree $d$ is said to be magic positive if all the coefficients are non-negative when $f(x)$ is expanded with respect to the basis $\{x^i(x+1)^{d-i}\}_{i=0}^d$. It is known that if $f(x)$ is magic positive, then the polynomial appearing in the numerator of its generating function is real-rooted. 
In this paper, we show that for a polynomial $f(x)$ with positive real coefficients, there exists a positive real number $k$ such that $f(k'x)$ is magic positive for any $k' \geq k$.
Furthermore, for any integer $d\geq3$, we show the existence of a $d$-dimensional polytope $P$ such that the Ehrhart polynomial of $kP$ is not magic positive for a given integer $k$.
Finally, we investigate how much certain polytopes need to be dilated to make their Ehrhart polynomials magic positive.
\end{abstract}

\section{Introduction}
Let $f(x) = \sum_{i=0}^d a_i x^i$ be a polynomial of degree $d$ with $a_i \in \RR$. We recall the following properties:
\begin{itemize}
    \item We say that $f(x)$ is \textit{unimodal}, if there exist a positive integer $k$ such that $a_0\leq a_1\leq \cdots \leq a_k \geq \cdots \geq a_d$.
    \item We say that $f(x)$ is \textit{log-concave}, if $a_i^2\geq a_{i-1}a_{i+1}$ for all $i$.
    \item We say that $f(x)$ is \textit{real-rooted}, if all its roots are real.
\end{itemize}
It is well-known that real-rootedness implies log-concavity, and that if all the $a_i$ are positive, then log-concavity implies unimodality. 
These properties frequently appear in mathematics and are actively studied using a wide range of techniques from algebra, analysis, probability theory, and combinatorics. See e.g., \cite{branden2015unimodality,higashitani2019arithmetic,stanley1989log}.
A particularly important class of polynomials where these properties have been extensively investigated is the numerator polynomial of the generating function of $f(x)$. Specifically, if
\[
\sum_{n\geq 0}f(n)t^n = \frac{h(t)}{(1-t)^{d+1}},
\]
where $h(t) = h_0 + h_1t + \cdots + h_dt^d$ is a nonzero polynomial of degree at most $d$ with real coefficients, then $h(t)$ can be viewed as a transformation of $f(x)$ under a change of basis. Indeed, since
\[
f(x) = \sum_{i = 0}^{d}h_i\binom{x+d-i}{d},
\]
the polynomial $h(t)$ encodes $f(x)$ in a binomial coefficient basis.
As one approach to study the real-rootedness of $h(t)$, we expand the polynomial $f(x)$ in a different basis as follows:
\[
f(x) = \sum_{i=0}^d b_i x^i (x+1)^{d-i}.
\]
If $b_0,\ldots,b_d\geq 0$, then $f(x)$ is said to be \textit{magic positive}. The term `magic positive' was introduced by Ferroni and Higashitani \cite{ferroni2024examples}. The following theorem shows that the magic positivity of a polynomial is useful for studying the real-rootedness of $h(t)$.

\begin{thm}[{\cite{beck2019h,branden2004linear,ferroni2024examples}}]
    Let $c_0, \ldots, c_d \geq 0$ and let $h(t) \in \RR[t]$ be the polynomial such that
    \[
    \sum_{n\geq0}\left(\sum_{j=0}^{d}c_jn^j(n+1)^{d-j}\right)t^n = \frac{h(t)}{(1-t)^{d+1}}.
    \]
    Then $h(t)$ is real-rooted.
\end{thm}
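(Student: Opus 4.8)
The plan is to exploit the linearity of the $h$-transform. Writing $h_j(t)$ for the numerator attached to the single basis element $x^j(x+1)^{d-j}$, that is,
\[
\sum_{n\geq 0} n^j(n+1)^{d-j}t^n = \frac{h_j(t)}{(1-t)^{d+1}},
\]
the hypothesis $c_j \geq 0$ gives $h(t) = \sum_{j=0}^d c_j h_j(t)$, a nonnegative combination. I would then appeal to the theory of compatible polynomials: if every $h_j$ is real-rooted with positive leading coefficient and the family $\{h_j\}_{j=0}^d$ is mutually interlacing (where $p \preceq q$ denotes that $p$ interlaces $q$), then they are pairwise compatible, and by the Chudnovsky--Seymour criterion the whole family is compatible, i.e. every nonnegative combination is again real-rooted. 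Thus the problem reduces to proving that $\{h_j\}_{j=0}^d$ is a mutually interlacing family.

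The key computation is to identify the $h_j$ and to set up a recursion in $d$. From the classical identity $\sum_{n\geq 0} n^d t^n = tA_d(t)/(1-t)^{d+1}$, where $A_d(t)$ denotes the $d$-th Eulerian polynomial, one reads off $h_d = tA_d$ and, after shifting the index of summation, $h_0 = A_d$; both are real-rooted. For the general step I would differentiate the generating function: if $g$ has degree $d-1$ with numerator $p(t)$, then $xg$ and $(x+1)g$, each of degree $d$, have numerators $V_d(p) = t\bigl(dp + (1-t)p'\bigr)$ and $U_d(p) = (1+(d-1)t)p + t(1-t)p'$ respectively. Two features make this workable: $U_d$ is exactly the classical recurrence operator for Eulerian polynomials, a reassuring consistency check; and one has the clean identity $U_d(p) = V_d(p) + (1-t)p$. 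Factoring $x^j(x+1)^{d-j}$ in its two obvious ways yields the recursions $h_j^{(d)} = U_d\bigl(h_j^{(d-1)}\bigr)$ and $h_j^{(d)} = V_d\bigl(h_{j-1}^{(d-1)}\bigr)$, which agree wherever both apply.

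With these in hand I would argue by induction on $d$, the case $d=1$ (where $h_0 = 1 \preceq t = h_1$) being immediate. Assuming $\bigl(h_0^{(d-1)},\dots,h_{d-1}^{(d-1)}\bigr)$ is mutually interlacing, the new sequence is obtained by applying $U_d$ termwise and then adjoining the top entry $h_d^{(d)} = V_d\bigl(h_{d-1}^{(d-1)}\bigr)$. To conclude one needs three properties of the operators: (i) $U_d$ and $V_d$ preserve real-rootedness with nonpositive roots; (ii) they preserve interlacing, so that $p \preceq q$ implies $U_d p \preceq U_d q$ and $V_d p \preceq V_d q$; and (iii) the cross-relation $U_d p \preceq V_d p$, which places the adjoined top entry correctly and follows from the difference $U_d p - V_d p = (1-t)p$ together with a standard interlacing manipulation. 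These propagate the mutually interlacing structure from level $d-1$ to level $d$.

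The main obstacle is precisely the package of operator lemmas (i)--(iii): showing that these specific first-order differential operators $p \mapsto a(t)p + b(t)p'$ preserve real-rootedness and, more delicately, interlacing, and that the all-pairs (not merely consecutive) condition survives each step. I would use Br\"and\'en's criteria for linear operators preserving real-rootedness to handle (i) and (ii), and exploit the explicit difference $(1-t)p$ for the cross-interlacing (iii). This is the same circle of ideas underlying the cited works, and once (i)--(iii) are secured the induction and the reduction to compatibility complete the proof.
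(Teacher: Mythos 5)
The paper does not prove this theorem at all: it is quoted as known, with the proof delegated to the cited references (Beck--Jochemko--McCullough, Br\"and\'en, Ferroni--Higashitani). So there is no in-paper argument to compare against, and your proposal is best measured against that literature --- where it holds up well: the reduction via linearity to the basis numerators $h_j$, the identification $h_0 = A_d$ and $h_d = tA_d$, the operators $U_d(p) = (1+(d-1)t)p + t(1-t)p'$ and $V_d(p) = t(dp+(1-t)p')$ with $U_d(p) = V_d(p) + (1-t)p$, and the two recursions $h_j^{(d)} = U_d(h_j^{(d-1)}) = V_d(h_{j-1}^{(d-1)})$ are all correct (I checked them against the generating-function identities), and the induction on $d$ through a mutually interlacing sequence is exactly the mechanism used in Br\"and\'en's and Ferroni--Higashitani's treatments. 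The one caveat is the one you flag yourself: properties (i)--(iii) of the operators are asserted, not proved, so as written this is an outline rather than a self-contained proof. They are genuinely available --- operators of the form $p \mapsto a(t)p + b(t)p'$ preserving real-rootedness and interlacing are covered by Br\"and\'en's characterization and by the Liu--Wang-type criteria, and the cross-relation $U_d p \preceq V_d p$ does follow from the explicit difference $(1-t)p$ together with the standard fact that adding an interlacing summand shifts roots monotonically --- but if you want a complete proof you must either carry out those verifications or cite them precisely; a small point to watch when you do is the degree bookkeeping ($\deg h_0^{(d)} = d-1$ while $\deg h_d^{(d)} = d$), which affects which convention of $\preceq$ applies at the top of the sequence.
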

Our main motivation comes from Ehrhart theory.
A \textit{lattice polytope} is the convex hull of finitely many elements in a lattice contained in $\mathbb{R}^d$, typically $\mathbb{Z}^d$. A lattice polytope $P$ is called \textit{reflexive} if
\[
P^\ast := \{ y \in \RR^d : \langle x, y\rangle \leq 1 \text{ for any } x \in P\}
\]
is also a lattice polytope, where $\langle \cdot, \cdot \rangle$ denotes the usual inner product of $\RR^d$. 

By a theorem of Ehrhart \cite{ehrhart1962sur}, $|nP\cap \mathbb{Z}^d|$ is given by a polynomial $E_P(n)$ of degree $\dim P$ in $n$ for all integers $n > 0$. The polynomial $E_P(n)$ is called the \textit{Ehrhart polynomial} of $P$.
If all coefficients of $E_P(n)$ are positive, then we call $P$ \textit{Ehrhart positive}. Ehrhart positivity of the polytope does not hold in general. There has been active research on characterizing which polytopes are Ehrhart positive. See, e.g., \cite{ferroni2021hypersimplices,liu2019positivity,postnikov2009permutohedra}.
It is known that the generating function $1 + \sum_{n\geq1}E_P(n)t^n$ of the Ehrhart polynomial takes the form of a rational function as follows:
\[
1+\sum_{n\geq1}E_P(n)t^n = \frac{h_0^\ast + h_1^\ast t + \cdots + h_d^\ast t^d}{(1-t)^{d+1}}.
\]
The numerator polynomial of this generating function is called the \textit{$h^\ast$-polynomial} of $P$, denoted by $h^\ast_P(t)$.
A fundamental theorem by Stanley \cite{stanley1980decompositions} states that the coefficients of the $h^\ast$-polynomial are always non-negative integers. It was proved by Hibi~\cite{hibi1992dual} that a $d$-dimensional lattice polytope $P$ is reflexive if and only if its $h^\ast$-polynomial is palindromic and has degree $d$, that is, $h^\ast _P (t)=t^dh^\ast _P \left(\frac{1}{t}\right)$.

In this paper, we consider the following three fundamental questions.
\begin{q}\label{question}
    Let $P$ be a $d$-dimensional lattice polytope.
    \begin{itemize}
        \item [(i)] Does there exist a positive integer $k$ such that $E_{kP}(n)$ is magic positive?
        \item [(ii)] If $E_{kP}(n)$ is magic positive, is $E_{(k+1)P}(n)$ also magic positive?
        \item [(iii)] In each dimension $d$, does there exist a positive integer $k$ such that $E_{kP}(n)$ is magic positive for any $d$-dimensional lattice polytope $P$?
    \end{itemize}
\end{q}
As a similar question, whether a dilated polytope has the same properties of the original polytope (such as IDP and unimodular triangulation) has been actively studied. See, e.g., \cite{bruns1997normal, David2014integer, jochemko2022symmetric, liu2021unimodular}.

The following first main result provides an answer to Question~\ref{question} (i). That is, it shows that if a polytope which is Ehrhart positive is sufficiently dilated, then the Ehrhart polynomial of the dilated polytope becomes magic positive.
\begin{thm}\label{main1}
    Let $f(x)$ be a polynomial with positive real coefficients. Then, there exists a positive real number $k$ such that $f(kx)$ is magic positive.
\end{thm}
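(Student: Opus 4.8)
The plan is to reduce the claim to a coefficientwise statement. If we write $f(kx) = \sum_{i=0}^d b_i(k)\, x^i(x+1)^{d-i}$, then it suffices to show that each coefficient $b_i(k)$ is eventually positive as $k \to \infty$, since there are only finitely many of them and we may take $k$ to exceed all the resulting thresholds. The first point to settle is that this expansion is well defined: the $d+1$ polynomials $x^i(x+1)^{d-i}$ vanish to order exactly $i$ at $x = 0$, so they have pairwise distinct orders of vanishing and hence form a basis of the space of polynomials of degree at most $d$; thus the $b_i(k)$ exist and are unique.

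First I would obtain an explicit formula for $b_i(k)$. Writing $f(x) = \sum_{j=0}^d a_j x^j$ with every $a_j > 0$, I substitute $x = \frac{u}{1-u}$, so that $x+1 = \frac{1}{1-u}$ and consequently $x^i(x+1)^{d-i} = \frac{u^i}{(1-u)^d}$. Applying this substitution to the identity $f(kx) = \sum_i b_i(k)\, x^i(x+1)^{d-i}$ and clearing the common denominator $(1-u)^d$ turns it into the polynomial identity
\[
\sum_{i=0}^d b_i(k)\, u^i = \sum_{j=0}^d a_j k^j\, u^j (1-u)^{d-j}.
\]
Expanding $(1-u)^{d-j}$ by the binomial theorem and reading off the coefficient of $u^i$ yields
\[
b_i(k) = \sum_{j=0}^{i} (-1)^{i-j}\binom{d-j}{i-j} a_j\, k^j.
\]

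The key observation is now immediate: viewed as a function of $k$, the coefficient $b_i(k)$ is a polynomial in $k$ of degree $i$, and its leading coefficient is the $j = i$ term, namely $\binom{d-i}{0} a_i = a_i$, which is strictly positive by hypothesis. The lower-order terms in $k$ may carry either sign, but they are dominated as $k \to \infty$. Hence for each $i$ there is a threshold $k_i > 0$ with $b_i(k) > 0$ for all $k \geq k_i$; setting $k = \max_{0 \le i \le d} k_i$ makes every $b_i(k) \geq 0$ simultaneously, so $f(kx)$ is magic positive. This in fact gives the stronger statement advertised in the abstract, that $f(k'x)$ is magic positive for every $k' \geq k$.

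The argument has no deep obstacle; the one place that requires care is the substitution step, where I must confirm that $x \mapsto \frac{u}{1-u}$ converts the polynomial identity in $x$ into a genuine polynomial identity in $u$ after multiplying by $(1-u)^d$, rather than merely a rational-function identity. This is fine because each side is a polynomial of degree at most $d$ in $u$, so equality as rational functions forces equality of their coefficients. The only role played by the positivity of the $a_j$ is to guarantee that the top $k$-coefficient $a_i$ of each $b_i(k)$ is positive, and this is precisely what drives the eventual positivity.
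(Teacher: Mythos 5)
Your proof is correct and follows essentially the same route as the paper: both derive the formula $b_i(k)=\sum_{j=0}^{i}(-1)^{i-j}\binom{d-j}{i-j}a_jk^j$ and conclude from the positive leading term $a_ik^i$ that each coefficient is eventually positive. The only difference is cosmetic --- you derive the change-of-basis formula self-containedly via the substitution $x=\frac{u}{1-u}$, whereas the paper obtains it by citing the standard $f$-vector to $h$-vector transformation.
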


The following second main result provides an answer to Question~\ref{question} (ii). That is, it shows that once the Ehrhart polynomial of a dilated polytope becomes magic positive, it remains magic positive under all subsequent dilations.
\begin{thm}\label{main2}
    Let $f(x)$ be a polynomial with positive real coefficients. If there exists a positive real number $k$ such that $f(kx)$ is magic positive, then so is $f(k'x)$ for any $k' \geq k$.
\end{thm}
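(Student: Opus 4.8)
The plan is to reduce magic positivity to an ordinary positivity statement about the coefficients of an auxiliary polynomial obtained via the substitution $u = x/(x+1)$, and then to track how that auxiliary polynomial transforms under dilation. First I would record the following reformulation. Writing $f(x) = \sum_{j=0}^d a_j x^j$ (so $d = \deg f$), I substitute $x = u/(1-u)$, whence $x+1 = 1/(1-u)$ and $u = x/(x+1)$. If $f(x) = \sum_{i=0}^d b_i x^i (x+1)^{d-i}$, then multiplying through by $(1-u)^d$ collapses the basis to monomials:
\[
(1-u)^d f\!\left(\frac{u}{1-u}\right) = \sum_{i=0}^d b_i\, u^i .
\]
Thus the magic coefficients $b_i$ of $f$ are exactly the ordinary coefficients of $u \mapsto (1-u)^d f(u/(1-u))$, and $f$ is magic positive if and only if this polynomial has nonnegative coefficients. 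Applying this to the dilation $f(k x)$, I set
\[
\Phi_k(u) := (1-u)^d f\!\left(\frac{ku}{1-u}\right) = \sum_{j=0}^d a_j k^j\, u^j (1-u)^{d-j},
\]
so that $f(kx)$ is magic positive precisely when $\Phi_k$ has nonnegative coefficients; by hypothesis $\Phi_k(u) = \sum_{i=0}^d b_i u^i$ with every $b_i \geq 0$.

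Next I would relate $\Phi_{k'}$ directly to $\Phi_k$. Putting $s := k'/k$ and solving $\frac{k'u}{1-u} = \frac{kw}{1-w}$ for $w$, a short computation gives $w = \frac{su}{1+(s-1)u}$ together with $1 - w = \frac{1-u}{1+(s-1)u}$. Feeding these into the definition of $\Phi_{k'}$ and simplifying the ratio of powers of $(1-u)$ and $(1-w)$ yields
\[
\Phi_{k'}(u) = \bigl(1+(s-1)u\bigr)^d\, \Phi_k\!\left(\frac{su}{1+(s-1)u}\right) = \sum_{i=0}^d b_i\, s^i\, u^i \bigl(1+(s-1)u\bigr)^{d-i}.
\]

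The conclusion is then immediate. Since $k' \geq k > 0$ we have $s \geq 1$, hence $s-1 \geq 0$, so each factor $\bigl(1+(s-1)u\bigr)^{d-i}$ is a polynomial in $u$ with nonnegative coefficients; combined with $b_i \geq 0$ and $s^i \geq 0$, every summand above has nonnegative coefficients, and therefore so does $\Phi_{k'}$. By the reformulation this says exactly that $f(k'x)$ is magic positive.

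The one genuinely delicate step, and the heart of the argument, is spotting the M\"obius-type substitution $w = su/(1+(s-1)u)$, which converts the dilation factor into the manifestly nonnegative binomial $\bigl(1+(s-1)u\bigr)^{d-i}$; once this is in hand the positivity is purely formal. I would note that the hypothesis that $f$ has positive coefficients is not essential to this implication beyond ensuring $\deg f = d$, so that the magic representation is well defined; the real input is the monotonicity in $s$ supplied by the expansion above. A useful sanity check before writing the final version is the linear case $f(x) = x+1$, where $\Phi_k(u) = 1 + (k-1)u$ and the formula reproduces $\Phi_{k'}(u) = 1 + (k'-1)u$, matching the elementary observation that $f(kx) = kx+1$ is magic positive exactly for $k \geq 1$.
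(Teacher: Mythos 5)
Your proof is correct, and it takes a genuinely different route from the paper. The paper argues by contradiction via Farkas' lemma: assuming $f(kx)$ is magic positive but $f(k'x)$ is not, it sets up a linear system whose feasibility is witnessed by the coefficient vector of $f$ and then exhibits an explicit dual certificate $\yb$ with entries $y_i=\binom{d-i}{\ell-i}(k'-k)^{\ell-i}(k')^i$, forcing a contradiction; this argument needs the hypothesis that the coefficients of $f$ are positive in order to have $\xb\geq 0$ in condition (i) of Farkas' lemma. Your argument instead makes the statement transparent: the substitution $u=x/(x+1)$ identifies the magic coefficients with the ordinary coefficients of $\Phi_k(u)=(1-u)^d f(ku/(1-u))$, and the M\"obius change of variable $w=su/(1+(s-1)u)$ with $s=k'/k\geq 1$ produces the explicit transition formula
\[
\Phi_{k'}(u)=\sum_{i=0}^d b_i\, s^i\, u^i\bigl(1+(s-1)u\bigr)^{d-i},
\]
i.e.\ the magic coefficients of $f(k'x)$ are manifestly nonnegative combinations of those of $f(kx)$. (Strictly speaking the manipulation passes through rational functions, but both sides are polynomials of degree at most $d$ agreeing on a dense set, so the identity is legitimate.) What your approach buys is threefold: it is elementary (no LP duality), it yields a quantitative formula $b_j^{(k')}=\sum_{i\leq j} b_i^{(k)} s^i\binom{d-i}{j-i}(s-1)^{j-i}$ rather than a pure existence-of-contradiction argument, and, as you correctly observe, it shows the implication holds for any polynomial of degree $d$ whose magic coefficients at dilation $k$ are nonnegative, without assuming positivity of the monomial coefficients of $f$. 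The paper's approach, on the other hand, stays entirely within the coefficient-transformation formula of its Lemma~\ref{important} and showcases a duality technique, but is less self-contained and slightly less general.
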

The following third main result provides an answer to Question~\ref{question} (iii). That is, it shows that in each dimension $d \geq 3$, there does not exist a fixed integer $k>0$ such that the Ehrhart polynomial of $kP$ is magic positive.
\begin{thm}\label{main3}
    Let $P$ be a lattice polytope of dimension $d$ which is Ehrhart positive. Then we have the following: 
    \begin{itemize}
        \item [(1)] For $d = 2$, $E_{2P}(n)$ is always magic positive.
        \item [(2)] For any $d \geq 3$ and $k \in \ZZ_{>0}$, there exists a lattice polytope $P$ such that $E_{kP}(n)$ is not magic positive.
    \end{itemize}
\end{thm}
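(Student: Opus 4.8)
For a lattice polygon $P$, Pick's theorem gives $E_P(n)=An^2+\tfrac{B}{2}n+1$, where $A=\mathrm{area}(P)$ and $B$ is the number of boundary lattice points, with the relation $A=I+\tfrac B2-1$ ($I$ the number of interior points). Thus $E_{2P}(n)=E_P(2n)=4An^2+Bn+1$. Writing $E_{2P}(n)=b_0(n+1)^2+b_1\,n(n+1)+b_2\,n^2$ and matching coefficients yields $b_0=1$, $b_1=B-2$, and $b_2=4A-B+1=4I+B-3$. Since any lattice polygon has $B\ge 3$ and $I\ge 0$, all three are nonnegative, so $E_{2P}$ is magic positive; this disposes of (1).

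\textbf{Reduction for $d\ge 3$.} The plan is to pin down which coefficient of $E_{kP}$ can be driven negative. Expanding in the magic basis, if $E_P(n)=\sum_j a_j n^j$ then the magic coefficients of $E_{kP}(n)=\sum_j a_jk^j n^j$ are
\[
b_i^{(k)}=\sum_{j=0}^{i}(-1)^{i-j}\binom{d-j}{\,i-j\,}a_j\,k^{j}.
\]
One checks that $b_0^{(k)}=1$ and $b_d^{(k)}=(-1)^dE_{kP}(-1)$ equals the number of interior points of $kP$, so these are always $\ge 0$; the failure must come from an intermediate index. A first attempt, making $b_1^{(k)}=a_1k-d$ negative, is blocked for large $k$: the $h^\ast$-formula forces $d!\,a_1\in\ZZ$, hence $a_1$ is bounded below once it is positive. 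I would therefore target
\[
b_{d-1}^{(k)}=\sum_{j=0}^{d-1}(-1)^{d-1-j}(d-j)\,a_j\,k^{j}
=a_{d-1}k^{d-1}-2a_{d-2}k^{d-2}+\cdots,
\]
aiming to construct Ehrhart positive polytopes for which $a_{d-1}$ stays bounded while the negative bulk of $b_{d-1}^{(k)}$ grows without bound as the polytope varies (the dilation parameter $k$ being fixed in advance).

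\textbf{The construction via balanced $h^\ast$-vectors.} The cleanest engine is the identity, valid for any $d$-polytope with $h^\ast$-vector $(h_0^\ast,\dots,h_d^\ast)$,
\[
\sum_{i=0}^d b_i^{(k)}u^i=(1-u)^dE_P\!\Big(\tfrac{ku}{1-u}\Big)
=\frac{1}{d!}\sum_{j=0}^d h_j^\ast\prod_{s=1}^{d}\big[(s-j)(1-u)+ku\big].
\]
I would apply this to the \emph{balanced} vector $h^\ast=(1,M,M,\dots,M)$. Two facts then need to be verified. First, Ehrhart positivity: writing $a_j=a_j^{(0)}+M\,c_j$, the standard simplex gives $a_j^{(0)}>0$, and a symmetric-function computation shows $c_j=\tfrac{2}{d!(d+1)}e_{d-j+1}(0,1,\dots,d)\ge 0$ (in fact $c_{d-1}=0$, so $a_{d-1}$ is the \emph{constant} $\tfrac{d+1}{2(d-1)!}$), whence all $a_j>0$ for every $M\ge 0$. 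Second, the sign of the growth: the coefficient of $M$ in $b_{d-1}^{(k)}$ equals $\tfrac1{d!}\big(kS'(k)-dS(k)\big)$ with $S(k)=d!\big[\binom{k+d}{d+1}-\binom{k}{d+1}\big]=\tfrac{1}{d+1}\big[\prod_{i=0}^d(k+i)-\prod_{i=0}^d(k-i)\big]$; since $S$ has only positive coefficients and leading term $d\,k^d$, the degree-$d$ term of $kS'-dS$ cancels and all remaining terms are strictly negative for $d\ge 3$, $k\ge 1$. Hence $b_{d-1}^{(k)}\to-\infty$ as $M\to\infty$, so choosing $M$ large (depending on $d,k$) makes $E_{kP}$ not magic positive. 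As a concrete model for $d=3$, the skew tetrahedron $\conv\{(1,0,0),(-1,1,0),(-1,-1,0),(0,0,m)\}$ realizes this: it is Ehrhart positive with $a_2=2$ fixed and $a_1$ of order $m$, giving $b_2^{(k)}=2k^2-2a_1k+3<0$ for $m\gg k$.

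\textbf{Main obstacle.} Everything above is effective once the $h^\ast$-vectors are realized, so the crux is the construction of explicit Ehrhart positive lattice $d$-simplices realizing $(1,M,\dots,M)$ (equivalently, degree-$d$ $h^\ast$ with a large balanced bulk) for all $d\ge 3$ and all $M$. For a simplex $\conv\{0,e_1,\dots,e_{d-1},v\}$ the box-point degrees are $\deg(j)=\big\lceil \tfrac jN+\sum_{i=1}^{d-1}\{-jv_i/N\}\big\rceil$, and one must choose the apex $v$ (with $N=Md+1$) so that these equidistribute into $M$ per degree in $\{1,\dots,d\}$; Reeve's choice $v=(1,\dots,1,N)$ fails precisely because it misses degrees $1$ and $d$. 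I expect the genuine difficulty to lie here—producing an explicit apex (or an alternative explicit family) whose degree distribution is balanced, together with the bookkeeping that keeps $h_d^\ast\ne 0$ (so that the interior-point term is retained, without which the computation above shows $b_{d-1}^{(k)}$ becomes positive). The coefficient identities and the sign of $kS'(k)-dS(k)$ are then routine.
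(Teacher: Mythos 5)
Your part (1) is correct and complete: the Pick's-theorem computation giving magic coefficients $1$, $B-2$, $4I+B-3$ is a valid (and arguably more elementary) substitute for the paper's argument, which instead writes $E_{kP}$ in the $h^\ast$-basis as $\binom{kn+2}{2}+a\binom{kn+1}{2}+b\binom{kn}{2}$ with $a\ge b\ge 0$ and checks the middle coefficient at $k=2$; the two routes are equivalent via the dictionary between $(A,B,I)$ and $(1,a,b)$.

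For part (2), however, there is a genuine gap, and it is exactly the one you flag yourself: you never produce the lattice polytope. Your strategy — take $h^\ast=(1,M,M,\dots,M)$, verify Ehrhart positivity from the parity structure of $\binom{n+d}{d+1}-\binom{n}{d+1}$, and show that the coefficient of $M$ in the magic coefficient $b_{d-1}^{(k)}$ is strictly negative for every fixed $k$ and $d\ge 3$ — is sound (I checked the sign computation: only $j\equiv d\pmod 2$ contribute, and each surviving term of $kS'(k)-dS(k)$ with $j<d$ carries the factor $-(d-j)$). But Theorem~\ref{main3}(2) asserts the existence of a polytope, so without an Ehrhart-positive lattice polytope realizing $(1,M,\dots,M)$ for arbitrarily large $M$ the proof is incomplete; your excursion through Reeve-type simplices and box-point degrees does not close this. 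The paper closes it immediately with the explicit simplex $P_{q,d}=\conv(\eb_1,\dots,\eb_d,-q(\eb_1+\cdots+\eb_d))$, whose $h^\ast$-polynomial is known to be $1+qt+\cdots+qt^d$ (citing Higashitani's computation), i.e.\ precisely your balanced vector with $M=q$. Modulo that citation, the paper's argument is otherwise the same mechanism as yours, except that it targets $c_2$ for odd $d$ and $c_3$ for even $d$ (because the parity of $d$ dictates which $a_j$ actually depend on $q$), whereas your uniform choice of $b_{d-1}^{(k)}$ handles both parities at once and is the cleaner bookkeeping. Your $d=3$ skew tetrahedron is only a single-dimension illustration and its claimed coefficient behaviour is asserted rather than verified, so it does not substitute for the general construction.
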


From Theorem 1.3, if a polytope which is Ehrhart positive is sufficiently dilated, then its Ehrhart polynomial has magic positivity. Moreover, from Theorem 1.4, once the Ehrhart polynomial has magic positivity due to the dilation of the polytope, it remains magic positive thereafter. From these, it is natural to define the following invariant:
\[
\mindex(P) := \min\{k \in \ZZ_{> 0}: \text{$E_{kP}(n)$ is magic positive}\}.
\]
In Section~\ref{example}, we explicitly compute $\mindex(P)$ for several polytopes.

\subsection*{Acknowledgements}
The author would like to thank Akihiro Higashitani for his helpful comments and advice on improving this paper. 
Moreover, the author would like to thank Luis Ferroni for suggesting the notation `$\mindex$' and for proposing the content of Section~\ref{CL}.

\section{Proof of Theorem~\ref{main1}}\label{sec:main1}
In this section, we give a proof of Theorem~\ref{main1}. 
For the proof of Theorem~\ref{main1}, we use the following lemma.
\begin{lem}\label{important}
    Let $f(x) = \sum_{i=0}^{d}b_i x^i$ be a polynomial of degree $d$. Then we have $f(kx) = \sum_{i=0}^{d}(\sum_{j=0}^{i} (-1)^{i-j}\binom{d-j}{i-j}b_jk^j) x^i(x+1)^{d-i}$.
\end{lem}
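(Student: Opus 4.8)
The plan is to reduce the entire statement to a single application of the binomial theorem. First I would write $f(kx) = \sum_{j=0}^{d} b_j k^j x^j$, so that, setting $c_j := b_j k^j$, the claim becomes the assertion that the polynomial $\sum_{j=0}^d c_j x^j$ expands in the basis $\{x^i(x+1)^{d-i}\}_{i=0}^d$ with coefficients $\sum_{j=0}^{i}(-1)^{i-j}\binom{d-j}{i-j}c_j$. By linearity it therefore suffices to expand each monomial $x^j$ separately in this basis.

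The heart of the argument is the monomial identity
\[
x^j = \sum_{i=j}^{d} (-1)^{i-j}\binom{d-j}{i-j}\, x^i(x+1)^{d-i}, \qquad 0 \le j \le d.
\]
To prove it I would factor $x^j$ out of the right-hand side and substitute $m = i-j$, reducing the claim to
\[
\sum_{m=0}^{d-j}(-1)^m \binom{d-j}{m} x^m (x+1)^{d-j-m} = 1,
\]
which is exactly the binomial expansion of $\bigl((x+1)-x\bigr)^{d-j} = 1$. This recognition — rewriting the constant $1$ as $((x+1)-x)^{d-j}$ — is the one genuinely clever point, and I expect it to be the main (though quite mild) obstacle; everything else is routine.

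Finally, I would substitute this monomial expansion into $f(kx) = \sum_{j=0}^{d} b_j k^j x^j$ and interchange the order of summation. Since $\binom{d-j}{i-j}$ vanishes whenever $i < j$, the inner sum over $j$ can be taken from $0$ to $i$, which produces precisely the coefficient $\sum_{j=0}^{i}(-1)^{i-j}\binom{d-j}{i-j}b_j k^j$ of $x^i(x+1)^{d-i}$ claimed in the lemma. The only care needed here is the index bookkeeping in the swap, together with the observation that these $d+1$ polynomials of degree $d$ do span all polynomials of degree at most $d$, so that the resulting expansion is well defined; neither point presents a real difficulty.
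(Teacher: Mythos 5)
Your proof is correct, and it takes a genuinely different (more self-contained) route than the paper. The paper does not compute anything: it observes that passing from the coefficient sequence $(b_0, kb_1, \ldots, k^d b_d)$ of $f(kx)$ in the monomial basis to the coefficients in the basis $\{x^i(x+1)^{d-i}\}$ is the same change of basis as the standard transformation from the $f$-vector to the $h$-vector of a simplicial complex, and simply cites the known formula for that transformation from Hibi's textbook. You instead derive the formula from scratch via the monomial identity $x^j = \sum_{i=j}^{d}(-1)^{i-j}\binom{d-j}{i-j}x^i(x+1)^{d-i}$, obtained by writing $1 = ((x+1)-x)^{d-j}$ and expanding; the index swap and the vanishing of $\binom{d-j}{i-j}$ for $i<j$ then give exactly the claimed coefficients. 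Both arguments rest on the same underlying binomial expansion (the $f$-to-$h$ formula is proved the same way), but your version is fully verifiable without consulting the reference, at the cost of a few lines of bookkeeping, while the paper's version buys brevity and situates the lemma within a familiar combinatorial framework. Your closing remark that the $d+1$ polynomials $x^i(x+1)^{d-i}$ span the degree-$\le d$ polynomials (so the expansion is unique) is a point the paper leaves implicit; including it is a small improvement.
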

\begin{proof}
    Let $a_i$ denote the coefficient of $x^i(x+1)^{d-i}$ when $f(kx)$ is expanded with respect to the basis $\{x^i(x+1)^{d-i}\}_{i=0}^d$. Then, the transformation from the sequence $(b_0, kb_1, \ldots, k^db_d)$ to the sequence $(a_0, \ldots, a_d)$ is the same type of transformation as that from the $f$-vector to the $h$-vector of a simplicial complex (see \cite[Exercise 11.2]{hibi1992algebraic}). Therefore, we obtain $a_i = \sum_{j=0}^{i} (-1)^{i-j}\binom{d-j}{i-j}b_jk^j$.
\end{proof}

By using Lemma~\ref{important}, we can prove Theorem~\ref{main1}.
\begin{proof}[Proof of Theorem~\ref{main1}]
    From Lemma~\ref{important}, the coefficient of $x^i(x+1)^{d-i}$ when $f(kx)$ is expanded with respect to the basis $\{x^i(x+1)^{d-i}\}_{i=0}^d$ is given by $g_i(k) := \sum_{j=0}^{i} (-1)^{i-j}\binom{d-j}{i-j}b_jk^j$. By assumption, $f(x)$ has positive real coefficients, that is, $b_i > 0$ for all $i$. This implies that $g_i(k) \geq 0$ for all $i$ when $k$ is sufficiently large. Therefore, there exists a positive real number $k$ such that $f(kx)$ is magic positive.
\end{proof}

\section{Proof of Theorem~\ref{main2}}\label{sec:main2}
In this section, we give a proof of Theorem~\ref{main2}.
For the proof of Theorem~\ref{main2}, we use the following variant of Farkas' lemma.
\begin{lem}[{\cite[Proposition 6.4.3]{Matouek2006under}}]\label{Farkas'}
    Let $A \in \RR^{m \times n}$ and $\bb \in \RR^m$. Then exactly one of the following two assertions is true:
    \begin{itemize}
        \item[(i)] There exists an $\xb \in \RR^n$ such that $A\xb \leq \bb$ and $\xb \geq 0$, or
        \item[(ii)] There exists a $\yb \in \RR^m$ such that $\yb^T A \geq 0, \yb \geq 0$ and $\yb^T \bb < 0$.
    \end{itemize}
\end{lem}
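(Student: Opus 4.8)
The statement asserts an \emph{exactly one} alternative, so the plan splits into two independent parts: showing the two assertions are mutually exclusive, and showing at least one of them must hold.

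\textbf{Part 1 (at most one holds).} I would dispose of mutual exclusivity by a direct computation. Suppose both (i) and (ii) held, witnessed by some $\xb$ and $\yb$. Since $\yb \geq 0$ and $A\xb \leq \bb$, multiplying the inequality componentwise by the nonnegative entries of $\yb$ and summing gives $\yb^T A \xb \leq \yb^T \bb$. On the other hand, $\yb^T A \geq 0$ together with $\xb \geq 0$ gives $\yb^T A \xb = (\yb^T A)\xb \geq 0$. Combining these yields $0 \leq \yb^T A \xb \leq \yb^T \bb$, contradicting $\yb^T \bb < 0$. Hence (i) and (ii) cannot both hold.

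\textbf{Part 2 (at least one holds).} This is the substantive direction, and I would obtain it from the separating-hyperplane theorem. First I reformulate (i) geometrically: let $K \subseteq \RR^m$ be the cone generated by the columns $A\eb_1,\dots,A\eb_n$ of $A$ together with the standard unit vectors $\eb_1,\dots,\eb_m$ of $\RR^m$. A general element of $K$ is precisely a vector $A\xb + \mathbf{s}$ with $\xb \geq 0$ and $\mathbf{s} \geq 0$, so $\bb \in K$ if and only if $A\xb \leq \bb$ for some $\xb \geq 0$; that is, $\bb \in K$ is exactly assertion (i). It therefore suffices to show that if $\bb \notin K$ then (ii) holds. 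Assuming $\bb \notin K$, the strict separation theorem yields a vector $\yb \in \RR^m$ and a scalar $\alpha$ with $\yb^T \bb < \alpha < \yb^T \vb$ for every $\vb \in K$. Since $0 \in K$ we get $\alpha < 0$, hence $\yb^T \bb < 0$. Moreover, for any $\vb \in K$ and any $\lambda > 0$ we have $\lambda\vb \in K$, so $\lambda\,\yb^T\vb > \alpha$; letting $\lambda \to \infty$ forces $\yb^T \vb \geq 0$ for all $\vb \in K$. Evaluating this on the generators gives exactly the three conditions of (ii): taking $\vb = A\eb_j$ yields $(\yb^T A)_j \geq 0$, i.e.\ $\yb^T A \geq 0$; taking $\vb = \eb_i$ yields $y_i \geq 0$, i.e.\ $\yb \geq 0$; and $\yb^T \bb < 0$ is already in hand.

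The main obstacle is the one structural input I flagged: the closedness of the finitely generated cone $K$, which must be established before the separation theorem can be applied, since separating a point from a merely convex (non-closed) set gives only a \emph{non-strict} inequality and would not force $\yb^T \bb < 0$. I would supply closedness either via Carath\'eodory's theorem for cones — every point of $K$ is a nonnegative combination of at most $m$ of the generators, so along a convergent sequence in $K$ one may pass to a subsequence using a fixed linearly independent subset and take the limit of the (nonnegative) coefficients — or, more directly, via Fourier--Motzkin elimination, which exhibits $K$ as the solution set of finitely many linear inequalities and hence as a closed polyhedron. Everything else is elementary linear algebra.
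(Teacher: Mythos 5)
The paper does not actually prove this lemma --- it is imported verbatim from Matou\v{s}ek--G\"artner (\cite[Proposition 6.4.3]{Matouek2006under}), so there is no internal proof to compare against, and your argument must be judged on its own. Judged so, it is correct and complete. The exclusivity computation $0 \leq \yb^T A \xb \leq \yb^T \bb$ is the standard one and is right. For the existence half, your reformulation of (i) as membership of $\bb$ in the cone $K$ generated by the columns of $A$ together with the unit vectors of $\RR^m$ (the slack-variable trick that absorbs the inequality $A\xb \leq \bb$) is exactly the right move for this \emph{inequality} form of Farkas' lemma, and the separation argument correctly extracts all three conditions of (ii): $\alpha < 0$ from $0 \in K$, nonnegativity of $\yb^T \mathbf{v}$ on all of $K$ by the scaling argument, and then $\yb^T A \geq 0$ and $\yb \geq 0$ by evaluating on the two families of generators. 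You were also right to single out closedness of $K$ as the one substantive input: strict separation fails for merely convex sets, and a non-strict separating inequality would not yield $\yb^T \bb < 0$. Either of your two patches is sound; in the Carath\'eodory route, note that once one restricts to a fixed linearly independent generating subset, the coefficient functionals are continuous on its span, so nonnegativity of the coefficients passes to the limit, which is exactly what the subsequence argument needs. Incidentally, the cited source establishes its Farkas-type statements via Fourier--Motzkin elimination, so your second patch is the one closest in spirit to the reference, while the separation argument you develop in full is the other classical proof; both are acceptable, and nothing in your write-up would fail.
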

By using Lemma~\ref{Farkas'}, we can prove Theorem~\ref{main2}.
\begin{proof}[Proof of Theorem~\ref{main2}]
Suppose that $f(kx)$ is magic positive and $f(k'x)$ is not magic positive for some $k' > k$.
That is, by Lemma~\ref{important}, $\sum_{j=0}^{i} (-1)^{i-j}\binom{d-j}{i-j}b_jk^j \geq 0$ holds for any $i = 0, \ldots, d$, and there exists some $\ell \in \{1, \ldots, d\}$ such that $\sum_{j=0}^{\ell} (-1)^{\ell-j}\binom{d-j}{\ell-j}b_j(k')^j < 0$ holds. Under the notation of Lemma~\ref{Farkas'}, we define $A \in \RR^{(\ell+1) \times \ell}$ and $\bb\in\RR^{\ell+1}$ as follows:
\[
\resizebox{\textwidth}{!}{$
\begin{array}{rl}
A = &
\begin{pmatrix}
    -k & 0 & \cdots & 0 & 0\\
    \binom{d-1}{1}k & -k^2 & \cdots & 0 & 0 \\
    \vdots & \vdots & \ddots & \vdots & \vdots \\
    (-1)^\ell \binom{d-1}{\ell-1}k & (-1)^{\ell-1} \binom{d-2}{\ell-2}k^2 & \cdots & \binom{d-\ell+1}{1}k^{\ell-1} & -k^\ell \\
    (-1)^{\ell-1} \binom{d-1}{\ell-1}k' & (-1)^{\ell-2} \binom{d-2}{\ell-2}(k')^2 & \cdots & -\binom{d-\ell+1}{1}(k')^{\ell-1} & (k')^\ell
\end{pmatrix}
\hspace{5pt} \text{and} \hspace{5pt}
\mathbf{b} = 
\begin{pmatrix}
    -d\\
    \binom{d}{2} \\
    \vdots \\
    (-1)^\ell \binom{d}{\ell} \\
    (-1)^{\ell+1} \binom{d}{\ell}
\end{pmatrix},
\end{array}
$}
\]
where the $(i, j)$-entry of $A$ is $(-1)^{i-j+1}\binom{d-j}{i-j}k^j$ for $1 \leq i \leq \ell$, the $(\ell+1, j)$-entry of $A$ is $(-1)^{\ell-j}\binom{d-j}{\ell-j}(k')^j$, the $i$-th entry of the vector $\bb$ is $(-1)^i\binom{d}{i}$ for $1 \leq i \leq \ell$ and the $(\ell+1)$-th entry of $\bb$ is $(-1)^{\ell+1}\binom{d}{\ell}$. Let $\xb = (b_1, \ldots, b_\ell)^T$. Then we see that the condition (i) of Lemma~\ref{Farkas'} holds. For $1 \leq i \leq \ell$, let $y_i = \binom{d-i}{\ell-i}(k'-k)^{\ell-i}(k')^i$ and $y_{\ell+1} = k^\ell$, where $y_j$ denotes the $j$-th entry of the vector $\yb$ for $1 \leq j \leq \ell + 1$. Then, the $j$-th entry of the vector $\yb^T A$ is the following: 

\begin{align*}
    \yb^T A &= \sum_{i=1}^{\ell} \left(\binom{d-i}{\ell-i}(k'-k)^{\ell-i}(k')^i \cdot (-1)^{i-j+1}\binom{d-j}{i-j}k^j\right) + k^\ell \cdot (-1)^{\ell-j}\binom{d-j}{\ell-j}(k')^j \\
    &= \sum_{i=0}^{\ell-j} \left((-1)^{i+1}\binom{d-j}{i}k^j \binom{d-i-j}{d-\ell}(k'-k)^{\ell-i-j}(k')^{i+j}\right) + (-1)^{\ell-j}\binom{d-j}{\ell-j}(k')^jk^\ell \\
    &= k^j(k')^j\binom{d-j}{\ell-j}\left(\sum_{i=0}^{\ell-j} \left((-1)^{i+1}\binom{\ell-j}{i}(k'-k)^{\ell-i-j}(k')^{i}\right) + (-1)^{\ell-j}k^{\ell-j}\right) \\
    &= k^j(k')^j\binom{d-j}{\ell-j}(-((k'-k)-k')^{\ell - j} + (-1)^{\ell-j}k^{\ell-j}) = 0.
\end{align*}
Furthermore, $\yb^T \bb$ is given as follows:
\begin{align*}
    \yb^T \bb &= \sum_{i=1}^{\ell} \binom{d-i}{\ell-i}(k'-k)^{\ell-i}(k')^i \cdot (-1)^i\binom{d}{i} + (-1)^{\ell+1}\binom{d}{\ell}k^\ell \\
    &= -\binom{d}{\ell}\left(\sum_{i=1}^{\ell}(-1)^{i+1}\binom{\ell}{i}(k'-k)^{\ell-i}(k')^i + (-1)^{\ell} k^\ell\right) \\
    &=-\binom{d}{\ell}\left((k'-k)^\ell+(-1)^{\ell+1}k^\ell+(-1)^\ell k^\ell\right)\\
    &= -\binom{d}{\ell}(k'-k)^{\ell} < 0.
\end{align*}
From these calculations and the fact that $\yb \geq 0$, the condition (ii) of Lemma~\ref{Farkas'} follows. This contradicts the fact that (i) and (ii) of Lemma~\ref{Farkas'} cannot hold simultaneously. Therefore, $f(k'x)$ is magic positive for any $k' \geq k$.
\end{proof}

\section{Proof of Theorem~\ref{main3}}\label{sec:main3}
In this section, we give a proof of Theorem~\ref{main3}. 

\begin{proof}[Proof of Theorem~\ref{main3}]
\begin{itemize}
    \item [(1)] 
    Let $P$ be a lattice polytope of dimension 2. Then we have 
    \[
    E_{kP}(n) = \binom{kn+2}{2} + a\binom{kn+1}{2} + b\binom{kn}{2}
    \]
    where $a$ and $b$ are non-negative integers satisfying $a \geq b$ (see \cite[Proposition 36.2]{hibi1992algebraic}). This expression can be rewritten as follows:
    \[
    E_{kP}(n) = (n+1)^2 + \frac{1}{2}((a-b+3)k-4)n(n+1)+\frac{1}{2}((a+b+1)k^2-(a-b+3)k+2)n^2.
    \]
    Since $a,b \geq0$, we see that 
    \[
    \frac{1}{2}\left((a+b+1)k^2-(a-b+3)k+2\right) = \binom{k-1}{2} + a\binom{k}{2} + b\binom{k+1}{2} \geq 0
    \]
    for all $k\geq1$. Therefore, $E_{kP}(n)$ is magic positive if and only if $(a-b+3)k-4 \geq 0$. That is, it suffices to have $k \geq \frac{4}{a-b+3}$. Since $a - b \geq 0$, $\frac{4}{a-b+3}$ takes its maximum value $\frac{4}{3}$ when $a = b$. Hence, we conclude that $E_{2P}(n)$ is magic positive.
    \item[(2)] 
    Let
    \[
    P_{q,d} = \conv(\eb_1, \ldots, \eb_d, -q(\eb_1 + \cdots + \eb_d))
    \]
    where $\eb_i$ denotes the $i$-th unit vector of $\RR^d$. The $h^\ast$-polynomial of this polytope is computed in \cite{higashitani2010shifted} and is given by $h^\ast_{P_{q,d}}(t) = 1 + qt + \cdots + qt^d$. Thus, the Ehrhart polynomial is
    \[
    E_{P_{q,d}}(n) = \binom{n+d}{d} + \sum_{i=1}^{d} q\binom{n+d-i}{d}.
    \]
    First, we confirm that $P_{q,d}$ is Ehrhart positive. It is clear that all the coefficients of $\binom{n+d}{d}$ are positive.
    Moreover, there exist $a_0, \ldots, a_{\bigfrac{d}{2}} > 0$ such that the following equation holds: 
    \begin{align}
        \sum_{i=1}^{d}\binom{n+d-i}{d} = \binom{n+d}{d+1} - \binom{n}{d+1} = \sum_{j=0}^{\bigfrac{d-1}{2}} a_j n^{d-2j}.
    \end{align}    
    Therefore, $P_{q,d}$ is Ehrhart positive. 

    Let $E_{P_{q,d}}(kn) = \sum_{i=0}^{d}c_in^i(n+1)^{d-i}$.
    When $d$ is an odd integer with $d \geq 3$, the coefficient $c_2$ takes the following form by Lemma~\ref{important} and the above discussion:
    \[
    c_2 = b_2k^2-\binom{d-1}{1}b_1k+\binom{d}{2}
    \]
    where $b_2$ is constant and $b_1$ is a linear polynomial in $q$ with positive leading coefficient from (4.1). Hence, as $q$ becomes large, the value of $k$ required to make $c_2$ positive also increases. Therefore, Theorem~\ref{main3} holds when $d$ is odd.

    When $d$ is an even integer with $d \geq 4$, the coefficient $c_3$ takes the following form by Lemma~\ref{important} and the above discussion:
    \[
    c_3 = b_3k^3 - \binom{d-2}{1}b_2k^2 + \binom{d-1}{2}b_1k - \binom{d}{3}
    \]
    where $b_3, b_1$ are constant, and $b_2$ is a linear polynomial in $q$ with positive leading coefficient from (4.1). By a similar argument to the odd case, Theorem~\ref{main3} also holds when $d$ is even.
\end{itemize}
\end{proof}

\section{The Case of Some Dilated Polytopes}\label{example}
From Theorem~\ref{main1} and Theorem~\ref{main2}, it is natural to define the following invariant for a lattice polytope $P$ which is Ehrhart positive:
\[
\mindex(P) := \min\{k \in \ZZ_{>0}: \text{$E_{kP}(n)$ is magic positive}\}.
\]
If there exists a positive integer $k$ such that $E_{kP}(n)$ is magic positive, then it follows that $E_P(n)$ is Ehrhart positive. Therefore, we see that computing $\mindex(P)$ is more difficult than proving the Ehrhart positivity of $P$.

In general, if the volume of a lattice polytope $P$ is less than 1, since the constant term of the Ehrhart polynomial is 1, the Ehrhart polynomial of $P$ is not magic positive. Therefore, the Ehrhart polynomial of $(0,1)$-polytope is never magic positive except for $[0,1]^d$. Furthermore, using the following proposition, we see that if a lattice polytope $P$ is reflexive and has volume less than 2, then its Ehrhart polynomial is never magic positive.
\begin{prop}[{\cite[Proposition 4.11]{beck2019h}}]
Let $P$ be a $d$-dimensional lattice polytope and let 
$$E_{P}(n) = \sum _{i=0}^da_in^i(1+n)^{d-i}$$
be its Ehrhart polynomial. Then $P$ is reflexive if and only if $a_j=a_{d-j}$ for any $j$.
\end{prop}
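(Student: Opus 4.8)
The plan is to convert the coefficient symmetry $a_j=a_{d-j}$ into a functional equation for the polynomial $E_P$, and then to recognize that functional equation as the classical description of reflexivity, combining Ehrhart--Macdonald reciprocity with Hibi's theorem (recalled in the introduction). The only place where the particular magic basis is used is the opening substitution; everything afterward is a standard generating-function manipulation.

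First I would substitute $n\mapsto -1-n$ into the expansion $E_P(n)=\sum_{i=0}^d a_i n^i(n+1)^{d-i}$. Since $1+(-1-n)=-n$, each basis element transforms cleanly, and a short computation gives
\[
E_P(-1-n)=\sum_{i=0}^d a_i(-1-n)^i(-n)^{d-i}=(-1)^d\sum_{i=0}^d a_i(n+1)^i n^{d-i}=(-1)^d\sum_{j=0}^d a_{d-j}\,n^j(n+1)^{d-j},
\]
where the last equality re-indexes by $j=d-i$. Because $\{n^j(n+1)^{d-j}\}_{j=0}^d$ is a basis of the space of polynomials of degree at most $d$, comparing magic-basis coefficients shows that $a_j=a_{d-j}$ for all $j$ holds \emph{if and only if} $E_P(-1-n)=(-1)^d E_P(n)$ as polynomials in $n$.

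Next I would identify this functional equation with reflexivity. By the Ehrhart--Macdonald reciprocity law, $(-1)^dE_P(-n)$ equals the polynomial $E_{P^\circ}(n)$ counting interior lattice points of $nP$, so the functional equation is equivalent to $E_{P^\circ}(n+1)=E_P(n)$ for all $n$. Passing to generating functions and using both $\sum_{n\geq0}E_P(n)t^n=h^\ast_P(t)/(1-t)^{d+1}$ and the reciprocity identity $\sum_{n\geq1}E_{P^\circ}(n)t^n=t^{d+1}h^\ast_P(1/t)/(1-t)^{d+1}$, the equation $E_{P^\circ}(n+1)=E_P(n)$ translates into $t\,h^\ast_P(t)=t^{d+1}h^\ast_P(1/t)$, that is, $h^\ast_P(t)=t^dh^\ast_P(1/t)$. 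Hence the functional equation holds exactly when $h^\ast_P$ is palindromic; and since the constant term $h_0^\ast=1$, palindromicity forces $h_d^\ast=1$, so $h^\ast_P$ automatically has degree $d$. By Hibi's theorem this is equivalent to $P$ being reflexive, closing the chain of equivalences.

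I expect the main obstacle to be the middle step: making the passage from the functional equation $E_P(-1-n)=(-1)^dE_P(n)$ to palindromicity of $h^\ast_P$ fully rigorous, since it requires correctly setting up Ehrhart--Macdonald reciprocity in its generating-function form and tracking the signs and degree shifts. By contrast, the initial substitution is an elementary identity, and the final appeal to Hibi's theorem is immediate once palindromicity and the degree bound are in hand.
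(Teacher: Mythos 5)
The paper does not prove this proposition at all: it is quoted verbatim from \cite[Proposition 4.11]{beck2019h}, so there is no in-paper argument to compare against. Your proposal is a correct, self-contained proof. The opening substitution $n\mapsto -1-n$ is computed correctly and, since $\{n^j(n+1)^{d-j}\}_{j=0}^{d}$ is a basis, it does establish that the symmetry $a_j=a_{d-j}$ is equivalent to the functional equation $E_P(-1-n)=(-1)^dE_P(n)$. The translation of that equation into $h^\ast_P(t)=t^dh^\ast_P(1/t)$ via Ehrhart--Macdonald/Stanley reciprocity is the standard manipulation and your signs and degree shifts check out: $\sum_{n\ge 0}E_{P^\circ}(n+1)t^n=t^dh^\ast_P(1/t)/(1-t)^{d+1}$, which matches $h^\ast_P(t)/(1-t)^{d+1}$ exactly when $h^\ast_P$ is palindromic; and $h_0^\ast=1$ then forces $h_d^\ast=1$, so the degree condition in Hibi's theorem comes for free. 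The only caveat worth flagging is the one the paper itself glosses over: Hibi's criterion characterizes reflexivity for polytopes containing the origin in their interior (equivalently, reflexivity up to unimodular equivalence or translation), so your final step inherits whatever convention the paper adopts in its statement of Hibi's theorem. This is a matter of bookkeeping, not a gap in the argument.
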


In this section, we discuss $\mindex(P)$ of some examples  of $(0,1)$-polytopes and reflexive polytopes $P$. 

\subsection{Standard simplex}\label{standard}
    The $d$-dimensional \textit{standard simplex}, denoted by $\Delta_d$, is the convex hull of $\eb_1, \eb_2, \ldots, \eb_{d+1}$ in $\RR^{d+1}$:
    \[
    \Delta_d := \conv\{\eb_1, \eb_2, \ldots, \eb_{d+1}\}.
    \]
    The Ehrhart polynomial of $\Delta_d$ is given by:
    \[
    E_{\Delta_d}(x) = \binom{x+d}{d}.
    \]
    \begin{prop}\label{standard:m-index}
        We have $\mindex(\Delta_d) = d$.
    \end{prop}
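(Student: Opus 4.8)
The plan is to exploit the product factorization
\[
E_{\Delta_d}(kx)=\binom{kx+d}{d}=\frac{1}{d!}\prod_{m=1}^{d}(kx+m),
\]
together with the elementary identity $kx+m=m(x+1)+(k-m)x$, which rewrites each factor explicitly in terms of $x$ and $x+1$. Distributing the resulting product
\[
d!\,E_{\Delta_d}(kx)=\prod_{m=1}^{d}\bigl(m(x+1)+(k-m)x\bigr)
\]
expresses $E_{\Delta_d}(kx)$ as a linear combination of the monomials $x^{i}(x+1)^{d-i}$; since $\{x^{i}(x+1)^{d-i}\}_{i=0}^{d}$ is a basis, this combination must coincide with the magic expansion, so the magic coefficient of $x^{i}(x+1)^{d-i}$ equals $\frac{1}{d!}\sum_{|S|=i}\bigl(\prod_{m\in S}(k-m)\bigr)\bigl(\prod_{m\notin S}m\bigr)$, summed over the $i$-element subsets $S\subseteq\{1,\dots,d\}$. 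Note also that $E_{\Delta_d}(x)=\frac{1}{d!}\prod_{m=1}^d(x+m)$ has positive coefficients, so $\Delta_d$ is Ehrhart positive and, by Theorems~\ref{main1} and~\ref{main2}, $\mindex(\Delta_d)$ is a well-defined threshold.

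For the upper bound $\mindex(\Delta_d)\le d$, I would set $k=d$. Then every factor $m(x+1)+(d-m)x$ has non-negative coefficients, since $m\ge 1$ and $d-m\ge 0$ for all $m\in\{1,\dots,d\}$. Hence the distributed product is a non-negative combination of the basis monomials, and by the uniqueness noted above these non-negative numbers are exactly the magic coefficients. Thus $E_{dP}(n)$ is magic positive.

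For the lower bound $\mindex(\Delta_d)\ge d$, fix $k$ with $1\le k\le d-1$ and suppose, for contradiction, that $E_{\Delta_d}(kx)=\sum_{i=0}^{d}c_i\,x^{i}(x+1)^{d-i}$ with all $c_i\ge 0$. The idea is to evaluate this identity at the real point $x=-d/k$. There $kx+d=0$, so the left-hand side is $\binom{0}{d}=0$; meanwhile $x=-d/k<0$ and $x+1=(k-d)/k<0$ (here $k<d$ is used), both strictly negative, so each $x^{i}(x+1)^{d-i}$ is $(-1)^{d}$ times a strictly positive number. A non-negative combination of quantities all carrying the single sign $(-1)^{d}$ can vanish only if every $c_i=0$, which is impossible because $E_{\Delta_d}\not\equiv0$. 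This contradiction shows $E_{kP}(n)$ is not magic positive for any $k<d$, and together with the upper bound it yields $\mindex(\Delta_d)=d$.

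The step I expect to require the most care is the identification of the expanded product with the genuine magic coefficients: because $x$ and $x+1$ are \emph{not} independent, one cannot simply read off the coefficients of $\prod_{m}\bigl(m(x+1)+(k-m)x\bigr)$ as if in two variables, and the argument must instead invoke uniqueness of representation in the magic basis. Once that is in place, both bounds are short, the only real ingenuity being the choice of the evaluation point $x=-d/k$, engineered so that all basis monomials share one sign exactly while $E_{\Delta_d}$ vanishes.
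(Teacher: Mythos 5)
Your proof is correct. For the upper bound it is essentially the paper's computation: both of you write $\binom{kx+d}{d}=\frac{1}{d!}\prod_{j=0}^{d-1}\bigl((d-j)(x+1)+(k-d+j)x\bigr)$ and observe that at $k=d$ every linear factor is a non-negative combination of $x$ and $x+1$, so distributing the product yields the (unique) expansion in the basis $\{x^i(x+1)^{d-i}\}_{i=0}^d$ with non-negative coefficients; your remark that one must invoke uniqueness of this expansion rather than treat $x$ and $x+1$ as independent variables is well placed. Where you genuinely depart from the paper is the lower bound. The paper dispatches it with the single observation that $1\le d-j\le d$, leaving implicit why the one factor $d(x+1)+(k-d)x$ having a negative coefficient for $k<d$ forces the whole product to fail magic positivity --- which is not automatic, since a product can be magic positive even when an individual factor is not. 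Your evaluation at $x=-d/k$, where $E_{\Delta_d}(kx)$ vanishes while every basis element $x^i(x+1)^{d-i}$ equals $(-1)^d$ times a strictly positive number (using $k<d$ to get $x+1<0$), gives a complete, self-contained proof that no non-negative expansion exists for any integer $1\le k\le d-1$. This fills in a step the paper leaves essentially unargued, at the cost of no extra machinery.
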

    \begin{proof}
        The Ehrhart polynomial of $k\Delta_d$ is as follows:
        \[
        E_{k\Delta_d}(x) = E_{\Delta_d}(kx) = \binom{kx+d}{d} = \frac{1}{d!}\prod_{j=0}^{d-1}((d-j)(x+1) + (k-d+j)x).
        \]
        Since $0 \leq j \leq d-1$, we have $1 \leq d-j \leq d$. Therefore, we have $\mindex(\Delta_d) = d$.
    \end{proof}

\subsection{Base polytope of minimal matroid}\label{minimal}
    We refer the reader to \cite{oxley2011matroid} for the introduction to matroids. It is known that if $M$ is a connected matroid with $n$ elements of rank $k$, then $|\Bc(M)| \geq k(n-k)+1$, where $\Bc(M)$ is the set of bases of $M$. Furthermore, there is a unique (up to isomorphism) connected matroid of size $n$ of rank $k$ for which equality holds. See, e.g., \cite{bruter1971extremal,murty1971number}. These matroids are referred to as the \textit{minimal matroids} denoted by $T_{k,n}$. It is clear from the minimality property of these matroids that the dual of the minimal matroid $T_{k,n}$ is isomorphic to $T_{n-k,n}$. The \textit{base polytope} of a matroid $M$, denoted by $B(M)$, is the convex hull of all indicator vectors in $\ZZ^n$, that is $e_B := \sum_{b \in B}e_b$ for $B \in \Bc(M)$.
    
    The Ehrhart polynomial of $B(T_{k,n})$, denoted by $D_{k,n}(x)$, is calculated in \cite{ferroni2022ehrhart} as follows:    
    \[
    D_{k,n}(x) = \frac{1}{\binom{n-1}{k-1}}\binom{x+n-k}{n-k}\sum_{j=0}^{k-1}\binom{n-k-1+j}{j}\binom{x+j}{j}.
    \]
    In \cite{ferroni2022ehrhart}, it is known that $B(T_{k,n})$ is Ehrhart positive.

    For the proof of Proposition~\ref{mindex:minimal}, we use the following lemma.
    \begin{lem}\label{magicproduct}
        Let $f(x)$ and $g(x)$ be polynomials with positive real coefficients of degree $n$ and $m$, respectively. If $f(x)$ and $g(x)$  are magic positive, then so is $f(x)g(x)$.
    \end{lem}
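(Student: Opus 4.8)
The plan is to exploit the fact that the magic basis is multiplicative in a graded sense, so that the magic coefficients of a product are simply the convolution of the magic coefficients of the factors. First I would use the hypothesis to write
\[
f(x) = \sum_{i=0}^{n} p_i\, x^i(x+1)^{n-i}, \qquad g(x) = \sum_{j=0}^{m} q_j\, x^j(x+1)^{m-j},
\]
where $p_i \geq 0$ and $q_j \geq 0$ for all $i$ and $j$, since $f(x)$ and $g(x)$ are magic positive.

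The key step is the elementary identity
\[
x^i(x+1)^{n-i} \cdot x^j(x+1)^{m-j} = x^{\,i+j}(x+1)^{(n+m)-(i+j)},
\]
which says that the product of two magic basis elements (of degrees $n$ and $m$) is again a magic basis element of degree $n+m$. Multiplying $f(x)g(x)$ out and collecting terms according to the total power $\ell = i+j$ then yields
\[
f(x)g(x) = \sum_{\ell=0}^{n+m}\left(\,\sum_{i+j=\ell} p_i\, q_j\right) x^\ell (x+1)^{(n+m)-\ell}.
\]
Each coefficient $\sum_{i+j=\ell} p_i q_j$ is a sum of products of non-negative reals, hence non-negative.

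Finally I would observe that $\{x^\ell (x+1)^{(n+m)-\ell}\}_{\ell=0}^{n+m}$ is genuinely a basis of the space of polynomials of degree at most $n+m$: since the lowest-degree term of $x^\ell(x+1)^{(n+m)-\ell}$ is $x^\ell$, these $n+m+1$ polynomials have pairwise distinct orders of vanishing at $0$ and are therefore linearly independent. Consequently the displayed expression is the \emph{unique} expansion of $f(x)g(x)$ in the magic basis, so the coefficients computed above are exactly its magic coefficients; as they are all non-negative, $f(x)g(x)$ is magic positive. I do not expect any real obstacle in this argument: the only points requiring care are the multiplicative identity for basis elements and the uniqueness of the basis expansion, both of which are immediate, and the genuine content of the statement is just that a convolution of two non-negative sequences is non-negative.
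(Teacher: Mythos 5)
Your proof is correct and is essentially the same as the paper's: both expand $f$ and $g$ in their respective magic bases, use the multiplicativity $x^i(x+1)^{n-i}\cdot x^j(x+1)^{m-j}=x^{i+j}(x+1)^{(n+m)-(i+j)}$, and conclude that the magic coefficients of the product are the convolution of two non-negative sequences. Your extra remark verifying that $\{x^\ell(x+1)^{(n+m)-\ell}\}_\ell$ is genuinely a basis (so the expansion is unique) is a small point the paper leaves implicit, but the argument is otherwise identical.
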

    \begin{proof}
        Since $f(x)$ and $g(x)$ are magic positive, there exist $a_0, \ldots, a_n, b_0, \ldots, b_m \geq 0$ such that
        \[
        f(x) = \sum_{i=0}^{n}a_ix^i(x+1)^{n-i} \hspace{5pt}\text{and}\hspace{5pt} g(x) = \sum_{i=0}^{m}b_ix^i(x+1)^{m-i}.
        \]
        Then we have
        \[
        f(x)g(x) = \sum_{k=0}^{n+m}\left(\sum_{i+j=k}a_ib_j\right)x^k(x+1)^{n+m-k},
        \]
        where we set $a_i = 0$ for $i > n$ and $b_j = 0$ for $ j > m$. Since $a_i, b_j \geq 0$ for all $i, j$, it follows that $f(x)g(x)$ is magic positive. 
\end{proof}
    \begin{prop}\label{mindex:minimal}
        We have the following.
        \begin{itemize}
            \item [(1)] For $n \geq 2$, we have $\mindex(B(T_{1,n})) = n-1$.
            \item [(2)] For $n \geq 4$, we have $\mindex(B(T_{2,n})) = n-2$.
        \end{itemize}
    \end{prop}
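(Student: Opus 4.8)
The plan is to handle the two parts separately, reducing part (1) to the standard simplex and establishing part (2) by combining Lemma~\ref{magicproduct} for the upper bound with a substitution argument for the lower bound. For part (1), I would specialize the formula for $D_{k,n}(x)$ at $k=1$: since $\binom{n-1}{0}=1$ and the inner sum degenerates to its $j=0$ term, one gets $D_{1,n}(x)=\binom{x+n-1}{n-1}=E_{\Delta_{n-1}}(x)$ (consistently, $B(T_{1,n})$ is the standard simplex $\Delta_{n-1}$). As $\mindex$ depends only on the Ehrhart polynomial, Proposition~\ref{standard:m-index} immediately yields $\mindex(B(T_{1,n}))=\mindex(\Delta_{n-1})=n-1$.

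For part (2), specializing at $k=2$ collapses the inner sum to $1+(n-2)(x+1)=(n-2)x+(n-1)$, so that $D_{2,n}(x)=\frac{1}{n-1}\binom{x+n-2}{n-2}\bigl((n-2)x+(n-1)\bigr)$. For the upper bound $\mindex\le n-2$, I would apply Lemma~\ref{magicproduct} to $D_{2,n}((n-2)x)$: by Proposition~\ref{standard:m-index} the factor $\binom{(n-2)x+n-2}{n-2}$ is magic positive (here the dilation $n-2$ equals the degree $n-2$), while the linear factor $(n-2)^2x+(n-1)$ is magic positive since $(n-2)^2\ge n-1$ for $n\ge4$; the positive scalar $\tfrac{1}{n-1}$ is harmless, so the product is magic positive.

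The crux is the lower bound, for which I would encode the magic coefficients through the substitution $x=\tfrac{u}{1-u}$. Writing $f(x)=\sum_i a_i x^i(x+1)^{d-i}$ and using $x^i(x+1)^{d-i}=u^i/(1-u)^d$, the polynomial $F(u):=(1-u)^{d}f\bigl(\tfrac{u}{1-u}\bigr)=\sum_i a_i u^i$ carries the magic coefficients as its ordinary coefficients, so $f$ is magic positive if and only if $F$ has non-negative coefficients. Applying this to $f(x)=D_{2,n}(kx)$ with $d=n-1$, a direct computation shows that the factor $(1-u)^{n-1}$ cancels entirely, leaving the product of linear factors $F(u)=\frac{1}{(n-1)!}\prod_{i=1}^{n-2}\bigl(i+(k-i)u\bigr)\cdot\bigl((n-1)+((n-2)k-(n-1))u\bigr)$. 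For $1\le k\le n-3$ the factor with $i=n-2$ equals $(n-2)-(n-2-k)u$, which has the positive root $u=\tfrac{n-2}{n-2-k}>0$; since $F(0)=D_{2,n}(0)=1>0$, a polynomial with non-negative coefficients and positive constant term cannot vanish at a positive argument, so $F$ must have a negative coefficient. Hence $D_{2,n}(kx)$ is not magic positive for any $k\le n-3$, giving $\mindex\ge n-2$ and therefore $\mindex(B(T_{2,n}))=n-2$.

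I expect the main obstacle to be discovering the clean product form of $F(u)$, in particular verifying the complete cancellation of $(1-u)^{n-1}$, since the raw coefficient extraction via Lemma~\ref{important} is unwieldy. Once the product form is in hand, the positive-root observation makes the negativity transparent and, pleasantly, uniform in $k$, so that Theorem~\ref{main2} is not even needed to rule out the smaller dilations.
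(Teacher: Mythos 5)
Your proof is correct, and while part (1) and the upper bound in part (2) coincide with the paper's argument (reduction to $\Delta_{n-1}$ via Proposition~\ref{standard:m-index}, and Lemma~\ref{magicproduct} applied to the factorization $D_{2,n}((n-2)x)=\frac{1}{n-1}\binom{(n-2)x+n-2}{n-2}\bigl((n-2)^2x+(n-1)\bigr)$), your lower bound is a genuinely different and arguably cleaner route. The paper instead expands $\binom{(n-3)x+n-2}{n-2}$ as a product of terms $\alpha(x+1)+\beta x$, extracts the coefficient of $x^{n-3}(x+1)$, multiplies by the linear factor to find that the coefficient of $x^{n-2}(x+1)$ in $D_{2,n}((n-3)x)$ equals $-(n^2-6n+7)(n-4)!<0$ for $n\geq 5$, treats $n=4$ by a separate explicit expansion, and then (implicitly, via Theorem~\ref{main2}) rules out all dilations $k<n-3$. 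Your substitution $u=\tfrac{x}{x+1}$, which converts magic positivity of $f$ into ordinary non-negativity of the coefficients of $F(u)=(1-u)^{d}f\bigl(\tfrac{u}{1-u}\bigr)$, yields the clean factorization $F(u)=\frac{1}{(n-1)!}\prod_{i=1}^{n-2}\bigl(i+(k-i)u\bigr)\cdot\bigl((n-1)+((n-2)k-(n-1))u\bigr)$ (the cancellation of $(1-u)^{n-1}$ does check out, since the $n-2$ binomial factors and the one linear factor each contribute one power of $(1-u)^{-1}$), and the positive root of the $i=n-2$ factor together with $F(0)=1>0$ kills non-negativity at once. This buys uniformity: all $k\leq n-3$ and all $n\geq 4$ are handled in one stroke, with no appeal to Theorem~\ref{main2} and no special case for $n=4$ (where, incidentally, the paper's displayed expansion of $D_{2,4}(x)$ contains a spurious $+x^3$ term that your computation correctly omits). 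The only point worth making explicit in a final write-up is the one-line verification that a polynomial with non-negative coefficients and positive constant term has no positive real zero, which you do state.
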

    \begin{proof}
        \begin{itemize}
            \item [(1)] Since $B(T_{1,n}) = \Delta_{n-1}$, we have $\mindex(B(T_{1,n})) = n-1$ by Proposition~\ref{standard:m-index}.
            \item [(2)] First, we verify that the Ehrhart polynomial of $(n-3)B(T_{2,n})$ is not magic positive. The Ehrhart polynomial is given by:
            \begin{align*}
                D_{2,n}((n-3)x) &= \frac{1}{n-1}\binom{(n-3)x+n-2}{n-2}\sum_{j=0}^{1}\binom{n-3+j}{j}\binom{(n-3)x+j}{j} \\
                & = \frac{1}{n-1}\binom{(n-3)x+n-2}{n-2}\left((n-1)(x+1)+(n^2-6n+7)x\right).
            \end{align*}
            When $n = 4$, we have
            \[
            D_{2,4}(x) = (x+1)^3 - \frac{5}{6}x(x+1)^2 + \frac{1}{6}x^2(x+1) + x^3,
            \]
            which is not magic positive. For $n \geq 5$, note that
            \[
            \binom{(n-3)x+n-2}{n-2} = \prod_{i=0}^{n-3}((n-2-i)(x+1) + (i-1)x).
            \]
            In the expansion of this product with respect to the basis $\{x^i(x+1)^{n-2-i}\}_{i=0}^{n-2}$, the coefficient of $x^{n-2}$ is 0, while the coefficient of $x^{n-3}(x+1)$ is $-(n-4)!$. Therefore, in the expansion of $D_{2,n}((n-3)x)$ with respect to the basis $\{x^i(x+1)^{n-1-i}\}_{i=0}^{n-1}$, the coefficient of $x^{n-2}(x+1)$ is $-(n^2-6n+7)(n-4)! < 0$. Hence, $D_{2,n}((n-3)x)$ is not magic positive.
            
            Next, we prove that the Ehrhart polynomial of $(n-2)B(T_{2,n})$ is magic positive. The Ehrhart polynomial is given by:
            \begin{align*}
                D_{2,n}((n-2)x) &= \frac{1}{n-1}\binom{(n-2)x+n-2}{n-2}\sum_{j=0}^{1}\binom{n-3+j}{j}\binom{(n-2)x+j}{j} \\
                & = \frac{1}{n-1}\binom{(n-2)x+n-2}{n-2}\left((n-1)(x+1) + (n^2-5n+5)x)\right).
            \end{align*}
            From Proposition~\ref{standard:m-index}, $\binom{(n-2)x+n-2}{n-2}$ is magic positive and $((n-1)(x+1) + (n^2-5n+5)x)$ is magic positive for all $n \geq 4$. Therefore, by Lemma~\ref{magicproduct}, we have $D_{2,n}((n-2)x)$ is magic positive.
        \end{itemize}
    \end{proof}
    
    Through computational experiments, we conjecture the following.
    \begin{q}
        For $1 \leq k \leq \bigfrac{n}{2}$, do we have $\mindex(B(T_{k,n})) = n- k$? 
    \end{q}

\subsection{Edge polytope of complete multipartite graph}
    Given a finite graph $G$ on $[d]$ with edge set $E(G)$, the \textit{edge polytope} is defined to be the convex hull of $\{\eb_i + \eb_j : \{i,j\} \in E(G)\}$.
    Let $q_1, q_2, \ldots, q_n$ denote a sequence of positive integers with $q_1 + q_2 + \cdots + q_n = d$ and
    let $V_1, V_2, \ldots, V_n$ denote a partition of $[d]$, that is, each $\emptyset \neq V_i \subset [d], V_i \cap V_j = \emptyset$ if $i \neq j$, and $[d] = V_1 \cup V_2 \cup \cdots \cup V_n$ with $|V_i| = q_i$, where $|V_i|$ is the cardinality of $V_i$ as a finite set.
    The \textit{complete multipartite graph of type $\mathbf{q} = (q_1, q_2, \ldots, q_n)$} is the finite graph $G_{\mathbf{q}}$ on the vertex set $[d] = V_1 \cup V_2 \cup \dots \cup V_n$ with the edge set
    \[
    E(G_{\mathbf{q}}) = \{\{k, \ell\} \mid k \in V_i, \ell \in V_j, 1 \leq i < j \leq n\}.
    \]
    Note that the edge polytope of a complete multipartite graph is the matroid polytope of a loopless matroid of rank 2. In particular, the edge polytope of a complete graph is the second hypersimplex.

    Let $E(G_{\qb}, x) = E_{P_{G_{\qb}}}(x)$.
    In \cite{ohsugi2000compressed}, the Ehrhart polynomial $E(G_{\qb}, x)$ of the edge polytope $P_{G_{\qb}}$ of the complete multipartite graph $G_{\qb}$ of type $\qb = (q_1, q_2, \ldots, q_n)$ on the vertex set $[d]$ with $n \geq 2$ is given as follows:
    \[
    \binom{2x+d-1}{d-1} - \sum_{k=1}^{n}\sum_{1 \leq i \leq j \leq q_k}\binom{x+j-i-1}{j-i}\binom{x+d-j-1}{d-j}.
    \]
    In \cite{ferroni2022ranktwo}, it is known that $P_{G_{\qb}}$ is Ehrhart positive.   
    In particular, if $G_{\qb}$ is a complete bipartite graph, then we have the following:
    \[
    E(G_{q_1,q_2},x) = \binom{x+q_1-1}{q_1-1}\binom{x+q_2-1}{q_2-1}.
    \]
    By Proposition~\ref{standard:m-index} and Lemma~\ref{magicproduct}, the following follows.
    \begin{prop}
        We have $\mindex(P_{G_{q_1,q_2}}) = \max\{q_1-1, q_2-1\}$.
    \end{prop}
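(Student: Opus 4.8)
The plan is to reduce the claim to the two building blocks already established in the excerpt, namely Proposition~\ref{standard:m-index} and Lemma~\ref{magicproduct}, together with the explicit factorization
\[
E(G_{q_1,q_2},x) = \binom{x+q_1-1}{q_1-1}\binom{x+q_2-1}{q_2-1}
\]
of the Ehrhart polynomial of the edge polytope of a complete bipartite graph. Dilating by $k$ replaces $x$ by $kx$, and since the dilation operator distributes over the product, I would write
\[
E(kP_{G_{q_1,q_2}},x) = \binom{kx+q_1-1}{q_1-1}\binom{kx+q_2-1}{q_2-1}.
\]
The first factor is exactly $E_{\Delta_{q_1-1}}(kx)$ and the second is $E_{\Delta_{q_2-1}}(kx)$, so the problem splits into understanding when each binomial factor becomes magic positive under dilation.

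First I would handle sufficiency. By Proposition~\ref{standard:m-index}, $\binom{kx+q_i-1}{q_i-1} = E_{k\Delta_{q_i-1}}(x)$ is magic positive precisely when $k \geq q_i-1$, i.e. $\mindex(\Delta_{q_i-1}) = q_i-1$. Hence for $k = \max\{q_1-1, q_2-1\}$ both factors are simultaneously magic positive, and each has positive real coefficients (being an Ehrhart polynomial of a dilated simplex). Lemma~\ref{magicproduct} then gives that the product $E(kP_{G_{q_1,q_2}},x)$ is magic positive, establishing $\mindex(P_{G_{q_1,q_2}}) \leq \max\{q_1-1, q_2-1\}$.

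For the reverse inequality I need to rule out magic positivity for $k = \max\{q_1-1,q_2-1\}-1$. Here I would argue directly from the explicit product form of the dilated binomial. Assume without loss of generality $q_1 \geq q_2$, so the claim is that $\mindex = q_1-1$ and I must show $E((q_1-2)P_{G_{q_1,q_2}},x)$ is not magic positive. Writing $\binom{(q_1-2)x+q_1-1}{q_1-1} = \frac{1}{(q_1-1)!}\prod_{j=0}^{q_1-2}\bigl((q_1-1-j)(x+1)+((q_1-2)-(q_1-1)+j)x\bigr)$ and inspecting the factor corresponding to $j = q_1-2$, one finds the term $1\cdot(x+1) + (-1)\cdot x$, which contributes a negative coefficient; more precisely the top coefficient (that of $x^{q_1-1}$) vanishes while the coefficient of $x^{q_1-2}(x+1)$ becomes negative, so the first factor is not magic positive at $k=q_1-2$. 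The main obstacle is that non-magic-positivity of a single factor does not immediately transfer to the product, since positive cross-terms from the second factor could in principle mask a negative coefficient. I would therefore track the specific highest-degree coefficients: the coefficient of $x^{q_1+q_2-2}$ in the product is the product of the top coefficients of the two factors, and the coefficient of $x^{q_1+q_2-3}(x+1)$ is a sum controlled by the leading entries of each factor's expansion, so that the sign computation localizes to the factor $\binom{(q_1-2)x+q_1-1}{q_1-1}$ and forces a strictly negative coefficient in the product. Carrying out this localized coefficient comparison is the delicate step; the rest is routine bookkeeping combining Proposition~\ref{standard:m-index} and Lemma~\ref{magicproduct}.
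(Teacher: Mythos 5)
Your sufficiency argument is exactly the paper's: factor $E(G_{q_1,q_2},kx)=E_{k\Delta_{q_1-1}}(x)\,E_{k\Delta_{q_2-1}}(x)$ and combine Proposition~\ref{standard:m-index} with Lemma~\ref{magicproduct}, giving $\mindex(P_{G_{q_1,q_2}})\leq\max\{q_1-1,q_2-1\}$. The gap is in your lower bound. First, a small indexing slip: with $d=q_1-1$ and $k=q_1-2$ the linear factor of $\binom{kx+d}{d}=\frac{1}{d!}\prod_{j=0}^{d-1}\bigl((d-j)(x+1)+(k-d+j)x\bigr)$ carrying a negative coefficient is the one with $j=0$, namely $d(x+1)-x$; the factor with $j=q_1-2$ is $(x+1)+(q_1-3)x$, which is harmless. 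More seriously, the proposed localization to the top-degree coefficients of the product fails. Take $q_1=q_2=3$ and $k=1$: then $\binom{x+2}{2}=(x+1)^2-\tfrac12x(x+1)$, so the product is $(x+1)^4-x(x+1)^3+\tfrac14x^2(x+1)^2$. Here the coefficients of $x^4$ and of $x^3(x+1)$ both vanish and the coefficient of $x^2(x+1)^2$ is $+\tfrac14$; the unique negative coefficient sits at $x(x+1)^3$, at the \emph{bottom} of the expansion. In general, when $q_1=q_2$ both binomial factors contain the negative form $d(x+1)-x$, and their product contributes \emph{positively} to precisely the coefficients you propose to inspect, so "the sign computation localizes to the factor $\binom{(q_1-2)x+q_1-1}{q_1-1}$" is not true as stated and the argument does not close.

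A clean way to finish the lower bound (which also reproves the one in Proposition~\ref{standard:m-index}) is via roots rather than coefficients: if $f(x)=\sum_{i=0}^N a_ix^i(x+1)^{N-i}$ with all $a_i\geq 0$ and $f$ of degree $N$, then for $x<-1$ one has $(-1)^Nf(x)=\sum_i a_i|x|^i|x+1|^{N-i}>0$, so a magic positive polynomial has no real root smaller than $-1$. Since $E(G_{q_1,q_2},kx)=\binom{kx+q_1-1}{q_1-1}\binom{kx+q_2-1}{q_2-1}$ vanishes at $x=-(q_1-1)/k$, and (taking $q_1\geq q_2$ without loss of generality) this root is strictly less than $-1$ for every $k\leq q_1-2$, magic positivity fails for all such $k$, giving $\mindex(P_{G_{q_1,q_2}})\geq q_1-1$. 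Note that the paper itself states the proposition as an immediate consequence of Proposition~\ref{standard:m-index} and Lemma~\ref{magicproduct} and leaves the lower bound implicit, so supplying an argument like the one above is genuinely needed; yours, as sketched, would not do the job.
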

    The following table presents the values of $\mindex(P_{G_{\mathbf{q}}})$.
    \begin{table}[H]
        \centering
        \begin{tabular}{|c||c|c|c|c|c|c|c|c|}
        \hline
          $\mathbf{q}$ & (1,1,1) & (2,2,2) & (3,3,3) & (1,2,3) & (1,2,4) & (1,2,5) & (1,2,3,4) & (1,1,1,5) \\ \hline
        $\mindex(P_{G_{\mathbf{q}}})$ & 2 & 2 & 4 & 3 & 4 & 5 & 5 & 5 \\ \hline
    \end{tabular}
    \end{table}
    Through computational experiments, we conjecture the following.
    \begin{q}
        For $d \geq 3$, is $\mindex(P_{G_{\mathbf{q}}})$ equal to one of $\max\{q_1, \ldots, q_d\}, \bigfrac{q_1 + \cdots +q_d}{2}$ or $\left\lceil\frac{q_1 + \cdots +q_d}{2}\right\rceil$?
    \end{q}

\subsection{Hypersimplex}\label{hypersimplex}
    Let us fix two positive integers $n$ and $k$ with $k \leq n$. The $(k,n)$-\textit{hypersimplex}, denoted by $\Delta_{k,n}$ is defined by:
    \[
    \Delta_{k,n} := \left\{(x_1,\ldots,x_n) \in [0,1]^n : \sum_{i=1}^{n}x_i = k\right\}.
    \]
    The hypersimplex $\Delta_{k,n}$ is also the matroid base polytope for a uniform matroid with $n$ elements of rank $k$.
    Moreover, $\Delta_{2,n}$ coincides with the edge polytope of the complete graph on $n$ vertices.
    In \cite{katzman2005hilbert}, the Ehrhart polynomial of $\Delta_{k,n}$, denoted by $E_{k,n}(x)$, is computed and is given by the following:
    \[
    E_{k,n}(x) = \sum_{j=0}^{k-1}(-1)^j\binom{n}{j}\binom{(k-j)x+n-1-j}{n-1}.
    \]
    In \cite{ferroni2021hypersimplices}, it is known that $\Delta_{k,n}$ is Ehrhart positive.

    \begin{prop}
        We have the following.
        \begin{itemize}
            \item [(1)] For $n \geq 2$, we have $\mindex(\Delta_{1,n}) = n-1$.
            \item [(2)] For $n \geq 4$, the Ehrhart polynomials of the $\bigfrac{n}{2}\Delta_{2,n}$ is magic positive. In particular, $\mindex(\Delta_{2,n}) \leq \bigfrac{n}{2}$.
            \item [(3)] For $n \geq 6$, the Ehrhart polynomials of the $\bigfrac{n}{2}\Delta_{3,n}$ is magic positive. In particular, $\mindex(\Delta_{3,n}) \leq \bigfrac{n}{2}$.
        \end{itemize}
    \end{prop}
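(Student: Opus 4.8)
The plan is to treat the three parts separately, reducing (1) to a result already proved and attacking (2) and (3) by expanding the dilated Ehrhart polynomial directly in the magic basis. Throughout I set $d=n-1$, the dimension of $\Delta_{k,n}$, and $K=\bigfrac n2$.

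Part (1) is essentially immediate. The polytope $\Delta_{1,n}=\{x\in[0,1]^n:\sum_i x_i=1\}$ is exactly the convex hull of $\eb_1,\dots,\eb_n$, i.e.\ the standard simplex $\Delta_{n-1}$. Hence $E_{1,n}(x)=\binom{x+n-1}{n-1}=E_{\Delta_{n-1}}(x)$, and Proposition~\ref{standard:m-index} gives $\mindex(\Delta_{1,n})=\mindex(\Delta_{n-1})=n-1$.

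For (2) and (3) I would compute the coefficients of $E_{k,n}(Kx)$ in the basis $\{x^i(x+1)^{n-1-i}\}$ and show they are all non-negative; by Theorem~\ref{main2} this yields $\mindex(\Delta_{k,n})\le K$. The starting point is the factorization
\[
\binom{ux+c}{d}=\frac{1}{d!}\prod_{i=0}^{d-1}\bigl((c-i)(x+1)+(u-c+i)x\bigr),
\]
valid for all $u,c$, which expresses a single binomial term in the magic basis: writing $C_a(u,c)$ for the coefficient of $x^a(x+1)^{d-a}$, one has $C_a(u,c)=\frac{1}{d!}\sum_{|S|=a}\prod_{i\in S}(u-c+i)\prod_{i\notin S}(c-i)$, the sum over $a$-subsets $S\subseteq\{0,\dots,d-1\}$. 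Applying this to each summand of $E_{k,n}(Kx)=\sum_{j=0}^{k-1}(-1)^j\binom{n}{j}\binom{(k-j)Kx+n-1-j}{n-1}$ gives, for every $a$, an explicit inclusion--exclusion expression for the magic-basis coefficient, and the goal becomes to prove each such expression is $\ge 0$. I would split the analysis by the parity of $n$, since $K=\bigfrac n2$ satisfies $2K=n$ (even case) versus $2K=n-1$ (odd case), and for $k=2,3$ the number of terms is small enough to combine by hand.

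For $k=2$ the leading term $\binom{2Kx+n-1}{n-1}$ is already magic positive: each of its factors $(d-i)(x+1)+(2K-d+i)x$ has non-negative coefficients because $d-i\ge 1$ and $2K-d+i\ge 2K-d\ge 0$ (as $2K\ge d$ for $K=\bigfrac n2$), so Lemma~\ref{magicproduct} applies. The subtracted term $n\binom{Kx+n-2}{n-1}$ factors with an explicit factor $Kx$ together with factors whose $x$-coefficients $K-(d-1)+i$ are negative for small $i$, so it is \emph{not} individually magic positive. The task therefore reduces to showing $C_a(2K,n-1)\ge n\,C_a(K,n-2)$ for each $a$ (trivial when the right-hand side is $\le 0$, and requiring a genuine estimate otherwise); I would attempt this by comparing the two elementary-symmetric sums $S$-by-$S$, exploiting that the first binomial's factors $2Kx+i$ dominate the second's $Kx+i$ after the chosen dilation. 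The case $k=3$ runs through the same scheme, now folding in the third alternating term $\binom{n}{2}\binom{Kx+n-3}{n-1}$, and I would again verify the small base cases ($n=4,5$ for (2); $n=6,7$ for (3)) directly.

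The main obstacle is precisely this comparison of inclusion--exclusion sums. Unlike the standard simplex (Proposition~\ref{standard:m-index}) and the minimal matroid $T_{2,n}$ (Proposition~\ref{mindex:minimal}), where magic positivity followed from factoring the Ehrhart polynomial into manifestly magic-positive pieces, the alternating sum here introduces genuine subtraction, so no single factorization is available and each magic coefficient stays non-negative only because of substantial cancellation. Controlling that cancellation---and confirming that the threshold is exactly $K=\bigfrac n2$ rather than a larger dilation---is the crux; I expect the cleanest route is to establish the domination of the elementary-symmetric sums above, handling the even and odd cases of $n$ separately, since the value of $2K$ relative to $d$ is what makes the argument tight.
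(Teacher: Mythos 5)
Your part (1) is complete and coincides with the paper's argument. For parts (2) and (3) your strategy is also the same as the paper's: expand each binomial summand via $\binom{ux+c}{d}=\frac{1}{d!}\prod_{i=0}^{d-1}\bigl((c-i)(x+1)+(u-c+i)x\bigr)$, so that each magic-basis coefficient becomes an alternating sum of elementary-symmetric-type expressions indexed by subsets $S\subseteq\{0,\dots,d-1\}$, and then try to prove non-negativity subset by subset. The difficulty is that you stop exactly where the proof begins: the domination $C_a(2K,n-1)\ge n\,C_a(K,n-2)$ and its three-term analogue for $k=3$ are announced as goals (``I would attempt this by comparing the two elementary-symmetric sums $S$-by-$S$''), but no inequality is actually established. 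As it stands, the proposal is a proof of (1) and an unproven reduction for (2) and (3).

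For comparison, the paper closes this gap in three steps. First, if the subset avoids the index $1$ (for $k=2$), respectively avoids both $1$ and $2$ (for $k=3$), the negative contributions vanish identically because one of the factors $c-i$ in the complementary product is zero; this is what makes a subset-by-subset argument feasible at all, and your sketch does not record it. Second, for subsets containing $1$ one verifies the base case $C_{\{1\}}\ge0$ directly and propagates positivity by the termwise ratio comparison $\frac{2\bigfrac{n}{2}-i}{i}\ge\frac{\bigfrac{n}{2}-i+1}{i-1}$ for $i\ge2$: adjoining an index multiplies the positive term by at least as much as the negative one, so domination is preserved. Third --- and this is the step your plan does not anticipate --- for $k=3$ the subsets containing both $1$ and $2$ do \emph{not} succumb to pairwise ratio domination, because the middle term of the three-term alternating sum is the largest; the paper must regroup $C_{\{1,2,i\}}$ as $\frac{\bigfrac{n}{2}(i-3)}{i}\bigl(\frac{1}{i}B-\frac{2}{i-1}C\bigr)+\frac{3\bigfrac{n}{2}-i}{i}(A-B+C)$, where $A,B,C$ are the three contributions to $C_{\{1,2\}}$, and then prove the separate numerical bounds $A-B+C\ge0$ and $C/B\le\frac{i-2}{2(i-1)}$. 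So the ``substantial cancellation'' you correctly identify as the crux genuinely requires more than $S$-by-$S$ domination of consecutive terms, and without an argument of this kind part (3) does not go through.
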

    \begin{proof}
        \begin{itemize}
            \item [(1)] Since $\Delta_{1,n} = \Delta_{n-1}$, we have $\mindex(\Delta_{1,n}) = n-1$ by Proposition~\ref{standard:m-index}.
            \item [(2)] The Ehrhart polynomial of $k\Delta_{2,n}$ is given by:
            \[
            E_{2,n}(kx) = \binom{2kx+n-1}{n-1} - \binom{n}{1}\binom{kx+n-2}{n-1}.
            \]
            Let $E_{2,n}(kx) = \sum_{m=0}^{n-1}a_m(x+1)^{n-1-m}x^m$. For $1 \leq i \leq 2$, define the $2 \times (n-1)$ matrix
            \[
            A_{i,2} =
            \begin{pmatrix}
                2-i & 3-i & \cdots & n-1-i & n-i\\
                (3-i)k+i-2 & (3-i)k+i-3 & \cdots & (3-i)k-n+i+1 & (3-i)k-n+i
            \end{pmatrix}
            \]
            and for $I \subset [n-1] = \{1,2,\ldots,n-1\}$, define
            \[
            B_i^I = (-1)^{i-1}\binom{n}{i-1}\prod_{j \in I}b_{i2j}\prod_{j \in [n-1] \backslash I}b_{i1j}, \hspace{0.2cm} C_{I} = \sum_{i=1}^{2}B_i^I
            \]
            where $b_{ijk}$ denotes the $(j,k)$-entry of the matrix $A_{i,2}$. Then, $a_\ell = \frac{1}{(n-1)!}\sum_{I \in \binom{[n-1]}{\ell}}C_I$ for all $0 \leq \ell \leq n-1$. Hence, to prove that the Ehrhart polynomial of $k\Delta_{2,n}$ is magic positive, it suffices to show that $C_I \geq 0$ for all $I \in \binom{[n-1]}{\ell}$,
            Note that $B_1^I > 0$ and if $1 \notin I$, then $C_I = B_1^I > 0$. We now assume $k = \bigfrac{n}{2}$. If $I = \{1\}$, then
            \[
            C_{\{1\}} = \left(2\bigfrac{n}{2}-1\right)(n-1)! - n\bigfrac{n}{2}(n-2)! = (n-2)!\left(\bigfrac{n}{2}(n-2)-(n-1)\right) > 0.
            \]
            For $i \geq 2$, observe that
            \[
            \frac{2\bigfrac{n}{2}-i}{i}-\frac{\bigfrac{n}{2}-i+1}{i-1} = \frac{\bigfrac{n}{2}(i-2)}{i(i-1)} \geq 0,
            \]
            and also $(2\bigfrac{n}{2}-i)/i > 0$ for all $i \in [n-1]$. Thus,
            \[
            C_{\{1,i\}} = \left(2\bigfrac{n}{2}-1\right)(n-1)!\frac{2\bigfrac{n}{2}-i}{i} - n\bigfrac{n}{2}(n-2)!\frac{\bigfrac{n}{2}-i+1}{i-1} > 0.
            \]
            By a similar argument, we can show that $C_I > 0$ for any $I \in \binom{[n-1]}{\ell}$. Therefore, the Ehrhart polynomial of $\bigfrac{n}{2}\Delta_{2,n}$ is magic positive.
            \item [(3)] The Ehrhart polynomial of $k\Delta_{3,n}$ is given by:
            \[
            E_{3,n}(kx) = \binom{3kx+n-1}{n-1} - \binom{n}{1}\binom{2kx+n-2}{n-1} + \binom{n}{2}\binom{kx+n-3}{n-1}.
            \]
            Let $E_{3,n}(kx) = \sum_{m=0}^{n-1}a_m(x+1)^{n-1-m}x^m$. For $1 \leq i \leq 3$, define the $2 \times (n-1)$ matrix
            \[
            A_{i,3} =
            \begin{pmatrix}
                2-i & 3-i & \cdots & n-1-i & n-i\\
                (4-i)k+i-2 & (4-i)k+i-3 & \cdots & (4-i)k-n+i+1 & (4-i)k-n+i
            \end{pmatrix}
            \]
            and for $I \subset [n-1]$, define
            \[
            B_i^I = (-1)^{i-1}\binom{n}{i-1}\prod_{j \in I}b_{i2j}\prod_{j \in [n-1] \backslash I}b_{i1j}, \hspace{0.2cm} C_{I} = \sum_{i=1}^{3}B_i^I
            \]
            where $b_{ijk}$ denotes the $(j,k)$-entry of the matrix $A_{i,3}$. Similarly to (2), we have $a_\ell = \frac{1}{(n-1)!}\sum_{I \in \binom{[n-1]}{\ell}}C_I$ for all $0 \leq \ell \leq n-1$, and to prove that the Ehrhart polynomial of the $k\Delta_{3,n}$ is magic positive, it suffices to show that $C_I \geq 0$ for all $I \in \binom{[n-1]}{\ell}$. Note that $B_1^I > 0$ and if $1,2 \notin I$, then $C_I = B_1^I > 0$.
            We assume that $k = \bigfrac{n}{2}$. When $I = \{1\}, \{2\}, \{1,2\}$, $C_I$ is as follows for each case:
            \begin{align*}
                C_{\{1\}} &= (n-1)!\left(3\bigfrac{n}{2}-1\right)-\binom{n}{1}(n-2)!\left(2\bigfrac{n}{2}\right) = (n-2)!\left(\bigfrac{n}{2}(n-3) - (n-1)\right) > 0, \\
                C_{\{2\}} &= (n-1)!\frac{3\bigfrac{n}{2}-2}{2}+\binom{n}{2}(-1)(n-3)!\bigfrac{n}{2} = (n-1)(n-3)!\left(\bigfrac{n}{2}(n-3)-(n-2)\right) > 0,\\
                C_{\{1,2\}} &= (n-1)!\frac{3\bigfrac{n}{2}-2}{2}\left(3\bigfrac{n}{2}-1\right)-\binom{n}{1}(n-2)!\left(2\bigfrac{n}{2}\right)\left(2\bigfrac{n}{2}-1\right)\\
                &\quad+\binom{n}{2}(n-3)!\bigfrac{n}{2}\left(\bigfrac{n}{2}+1\right) \\
                &= (n-3)!\left(\bigfrac{n}{2}(n-3) - (n-1)\right)\left(\bigfrac{n}{2}(n-3) - (n-2)\right) > 0.
            \end{align*}
            
            For $i \geq 3$, since we have
            \[
            \frac{3\bigfrac{n}{2}-i}{i} \geq \frac{2\bigfrac{n}{2}-i+1}{i-1} \geq \frac{\bigfrac{n}{2}-i+2}{i-2},
            \]
            and $(3\bigfrac{n}{2}-i)/i > 0$ by a similar argument to $\bigfrac{n}{2}\Delta_{2,n}$, if $\{1,2\} \not\subset I$, then $C_I > 0$.
            Let $A = (n-1)!\frac{3\bigfrac{n}{2}-2}{2}(3\bigfrac{n}{2}-1), B = \binom{n}{1}(n-2)!(2\bigfrac{n}{2})(2\bigfrac{n}{2}-1), C = \binom{n}{2}(n-3)!\bigfrac{n}{2}(\bigfrac{n}{2}+1)$. For $i \geq 3$, if $I = \{1,2,i\}$, then we have
            \begin{align*}
                C_I = C_{\{1,2,i\}} &= \frac{3\bigfrac{n}{2}-i}{i}A-\frac{2\bigfrac{n}{2}-i+1}{i-1}B+\frac{\bigfrac{n}{2}-i+2}{i-2}C\\
                &=\frac{\bigfrac{n}{2}(i-3)}{i}\left(\frac{1}{i}B - \frac{2}{i-1}C\right) + \frac{3\bigfrac{n}{2}-i}{i}(A-B+C)
            \end{align*}
            and 
            \[
            \frac{C}{B} = \frac{\binom{n}{2}(n-3)!\bigfrac{n}{2}(2\bigfrac{n}{2}-1)}{n(n-2)!(2\bigfrac{n}{2})(2\bigfrac{n}{2}-1)} = \frac{(n-1)(\bigfrac{n}{2}+1)}{4(n-2)(2\bigfrac{n}{2}-1)} \leq \frac{(6-1)(3+1)}{4\times4(2\times3-1)}  = \frac{1}{4} \leq \frac{i-2}{2(i-1)}.
            \]
            Therefore, we have $C_{\{1,2,i\}} > 0$. By a similar argument, we can show that $C_I > 0$ for any $I \in \binom{[n-1]}{\ell}$. From the above discussion, we conclude that the Ehrhart polynomial of $\bigfrac{n}{2}\Delta_{3,n}$ is magic positive.
        \end{itemize}
    \end{proof}
    Through computational experiments, we conjecture the following.
    \begin{q}
        For $0 \leq k \leq \bigfrac{n}{2}-2$, do we have $\mindex(\Delta_{\bigfrac{n}{2}- k, n}) = k+2$?
    \end{q}

    It has been observed that it is sufficient to make their Ehrhart polynomials magic positive by dilating the polytopes introduced so far by a factor no less than their dimension. The following two examples illustrate cases where a polytope fails to exhibit magic positivity in its Ehrhart polynomial unless it is sufficiently dilated relative to its dimension.

\subsection{Cross polytope}\label{cross}
    The $d$-dimensional \textit{cross polytope}, denoted by $\lozenge_d$, is a polytope defined by 
    \[
    \lozenge_d := \conv\{\pm\eb_1, \pm\eb_2, \ldots, \pm\eb_d\}.
    \]
    The Ehrhart polynomial of $\lozenge_d$ is given by:
    \[
    E_{\lozenge_d}(x) = \sum_{i=0}^{d}2^i\binom{d}{i}\binom{x}{i},
    \]
    see, e.g., \cite{liu2019positivity}. The Ehrhart positivity of $\lozenge_d$ follows from the CL-ness. See Section~\ref{CL} for the CL-ness of the polytope.
    
    The following table presents the values of $\mindex(\lozenge_d)$. The table shows that to make the Ehrhart polynomial of a cross polytope magic positive, the polytope usually needs to be dilated beyond its dimension.
    \begin{table}[H]
    \centering
    \begin{tabular}{|c||c|c|c|c|c|c|c|c|c|c|c|c|c|c|c|c|}
        \hline
          $d$ & 1 & 2 & 3 & 4 & 5 & 6 & 7 & 8 & 9 & 10 & 11 & 12 & 13 & 14 & 15 & 16\\ \hline
        $\mindex(\lozenge_d)$ & 1 & 1 & 2 & 4 & 6 & 10 & 13 & 18 & 23 & 29 & 35 & 42 & 50 & 59 & 68 & 78\\ \hline
    \end{tabular}
    \end{table}

\subsection{Standard reflexive simplex}\label{sr}
    The $d$-dimensional \textit{standard reflexive simplex}, denoted by $\Delta_d'$, is the convex hull of 
    \[
    \Delta_d' := \conv\{\eb_1, \eb_2, \ldots, \eb_d, -(\eb_1 + \cdots + \eb_d)\}.
    \]
    The Ehrhart polynomial of $\Delta_d'$ is given by:
    \[
    E_{\Delta_d'}(n) = \sum_{i=0}^{d}\binom{n+d-i}{d}.
    \]
    
    As with the case of the cross polytope, the Ehrhart positivity of $\Delta_d'$ follows from the CL-ness.
    
    The following table presents the values of $\mindex(\Delta_d')$.
    \begin{table}[H]
    \centering
    \begin{tabular}{|c||c|c|c|c|c|c|c|c|c|c|c|c|c|c|c|c|}
        \hline
          $d$ & 1 & 2 & 3 & 4 & 5 & 6 & 7 & 8 & 9 & 10 & 11 & 12 & 13 & 14 & 15 & 16\\ \hline
        $\mindex(\Delta_d')$ & 1 & 2 & 4 & 10 & 20 & 34 & 55 & 83 & 119 & 163 & 218 & 284 & 361 & 452 & 557 & 677\\ \hline
    \end{tabular}
    \end{table}

\subsection{CL-polytope}\label{CL}
    A reflexive polytope $P$ is said to be a \textit{CL-polytope} if all of the complex roots of the Ehrhart polynomial $E_P(n)$ lie on the line $\mathrm{Re}(z) = -\frac{1}{2}$ in the complex plane, where $\mathrm{Re}(z)$ denotes the real part of a complex number $z$. 
    In \cite[Lemma~4.15]{ferroni2024examples}, it is known that if a reflexive polytope $P$ is a CL-polytope, then $P$ is Ehrhart positive.
    \begin{lem}\label{quadratic}
        Let $f(x)$ be a quadratic polynomial with roots $-\frac{1}{2}\pm b\sqrt{-1}$. Then $f(kx)$ is magic positive if and only if $k \geq \frac{1}{2} + 2b^2$.
    \end{lem}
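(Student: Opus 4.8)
The plan is to reduce the statement to a direct computation of three coefficients. Since magic positivity is manifestly invariant under multiplication by a positive constant, and since a quadratic with the prescribed complex-conjugate roots is (up to a nonzero scalar) equal to its monic representative, I would first normalise and write
\[
f(x) = \left(x+\tfrac{1}{2}\right)^2 + b^2 = x^2 + x + \left(\tfrac{1}{4}+b^2\right),
\]
so that in the notation of Lemma~\ref{important} we have $d=2$, $b_0 = \tfrac14 + b^2$, $b_1 = 1$, and $b_2 = 1$. (Here I would silently assume the leading coefficient is positive; if it were negative the polynomial could never be magic positive, since its constant coefficient $a_0$ in the target basis would be negative.)

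Next I would apply Lemma~\ref{important} with $d=2$ to expand $f(kx)$ in the basis $\{(x+1)^2,\, x(x+1),\, x^2\}$, equivalently solving the linear system obtained by matching coefficients in
\[
a_0 (x+1)^2 + a_1\, x(x+1) + a_2\, x^2 = k^2 x^2 + kx + \left(\tfrac14 + b^2\right).
\]
This yields
\[
a_0 = \tfrac14 + b^2, \qquad a_1 = k - \tfrac12 - 2b^2, \qquad a_2 = k^2 - k + \tfrac14 + b^2 = \left(k-\tfrac12\right)^2 + b^2,
\]
which can be read off directly from the formula $a_i = \sum_{j=0}^{i}(-1)^{i-j}\binom{d-j}{i-j}b_j k^j$ of Lemma~\ref{important}.

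Finally I would examine the signs. The coefficients $a_0 = \tfrac14 + b^2$ and $a_2 = (k-\tfrac12)^2 + b^2$ are strictly positive for every real $k$, so neither imposes any constraint. Hence $f(kx)$ is magic positive if and only if the single middle coefficient satisfies $a_1 = k - \tfrac12 - 2b^2 \geq 0$, i.e. if and only if $k \geq \tfrac12 + 2b^2$, which is exactly the claim. There is essentially no obstacle here: the whole content is the completion of the square showing $a_2 > 0$ automatically, so that the threshold is governed solely by the linear coefficient $a_1$. The only point requiring a word of care is the normalisation in the first step, ensuring that passing to the monic representative does not change whether the polynomial is magic positive.
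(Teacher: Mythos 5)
Your proposal is correct and follows essentially the same route as the paper: expand $f(kx)$ in the basis $\{(x+1)^2, x(x+1), x^2\}$, observe that the outer coefficients $\tfrac14+b^2$ and $(k-\tfrac12)^2+b^2$ are automatically positive, and read off the threshold $k\geq\tfrac12+2b^2$ from the middle coefficient. The only cosmetic difference is that the paper carries the positive leading coefficient $a$ through the computation rather than normalising to the monic representative, and your explicit remark about that normalisation is a harmless (indeed slightly more careful) addition.
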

    \begin{proof}
        Since the roots of $f(x)$ are $-\frac{1}{2}\pm b\sqrt{-1}$, we have
        \begin{align*}
            f(kx) &= a\left(kx + \frac{1}{2} + bi\right)\left(kx + \frac{1}{2} - bi\right) \\
            &= a\left(\left(\frac{1}{4}+b^2\right)(x+1)^2 + \left(k-\frac{1}{2}-2b^2\right)x(x+1) + \left(\left(k-\frac{1}{2}\right)^2+b^2\right)x^2\right),
        \end{align*}
        where $a$ is a positive real number. Therefore, $f(kx)$ is magic positive if and only if $k \geq \frac{1}{2}+2b^2$.
    \end{proof}    

    \begin{prop}[\cite{braun2008norm}]\label{bound}
        The roots $\alpha$ of an Ehrhart polynomial of degree $d$ satisfy the inequality $|\alpha + \frac{1}{2}| \leq d(d-\frac{1}{2})$.
    \end{prop}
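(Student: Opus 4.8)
The plan is to work from the $h^\ast$-expansion $E_P(n) = \sum_{i=0}^{d} h_i^\ast \binom{n+d-i}{d}$, where Stanley's nonnegativity gives $h_i^\ast \geq 0$ and $h_0^\ast = 1$, so that $\sum_{i=0}^d h_i^\ast > 0$. To exploit the centering at $-\tfrac12$ in the statement, I would substitute $z = \alpha + \tfrac12$ and write each binomial as a product:
\[
\binom{\alpha+d-i}{d} = \frac{1}{d!}\prod_{m=0}^{d-1}\left(z + \left(d - \tfrac12 - i - m\right)\right).
\]
Setting $s_{i,m} = d - \tfrac12 - i - m$, a direct check shows $|s_{i,m}| \le d - \tfrac12$ for all $0 \le i \le d$ and $0 \le m \le d-1$, since $i+m$ ranges over $\{0,\ldots,2d-1\}$. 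Thus a root $\alpha$ of $E_P$ corresponds to a root $z$ of $F(z) := \sum_{i=0}^d h_i^\ast \prod_{m=0}^{d-1}(z + s_{i,m})$, and it suffices to prove $|z| \le d(d-\tfrac12)$.

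The heart of the argument is a positivity-of-real-part estimate: I would show that if $|z| > d(d-\tfrac12)$, then $F(z) \neq 0$. Factoring out $z^d$, for each $i$ consider $\prod_{m}\big(1 + s_{i,m}/z\big)$, whose every factor satisfies $|s_{i,m}/z| < \frac{d-1/2}{d(d-1/2)} = \frac1d$. Hence each factor lies strictly inside the disc of radius $\tfrac1d$ about $1$, so its argument has absolute value less than $\arcsin(\tfrac1d)$. The product of these $d$ factors then has argument of absolute value less than $d\arcsin(\tfrac1d)$, and since $d\arcsin(\tfrac1d) \le \tfrac{\pi}{2}$ for all $d$ (with strict inequality for $d \ge 2$, as $d\arcsin(\tfrac1d)$ decreases from $\tfrac\pi2$ at $d=1$ toward $1$), the real part of $z^{-d}\prod_m(1+s_{i,m}/z)$ is strictly positive. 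Because the $h_i^\ast$ are nonnegative and not all zero, $\mathrm{Re}\big(z^{-d}F(z)\big) = \sum_i h_i^\ast \,\mathrm{Re}\big(\prod_m(1+s_{i,m}/z)\big) > 0$, so $F(z) \ne 0$, the desired contradiction. The degenerate case $d=1$, where the angle bound is an equality, I would dispatch directly: $F$ is then linear and its root is a convex combination of the $-s_{i,0}$, each of modulus $\le \tfrac12 = 1\cdot(1-\tfrac12)$.

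The main obstacle is obtaining the sharp constant $d(d-\tfrac12)$ rather than merely a bound of the right order. The crude alternative of estimating $\big|\prod_m(1+s_{i,m}/z) - 1\big|$ by $(1 + (d-\tfrac12)/|z|)^d - 1$ and forcing it below $\sum_i h_i^\ast$ only yields $|z| \le (d-\tfrac12)/(2^{1/d}-1)$, which is of order $1.44\,d^2$; so the argument-addition estimate — converting the uniform bound $|s_{i,m}| \le d-\tfrac12$ into the exact radius via $d\arcsin(\tfrac1d) \le \tfrac\pi2$ — is precisely the step that recovers Braun's constant. The remaining points (verifying $|s_{i,m}|\le d-\tfrac12$, checking that the $d$ factor-arguments add without wraparound because their total stays within $(-\pi,\pi)$, and the $d=1$ case) are routine.
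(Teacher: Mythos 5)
Your argument is correct, and it is essentially Braun's original proof of this result: the paper itself gives no proof, citing \cite{braun2008norm}, and Braun's argument is precisely the one you reconstruct --- expand $E_P$ in the $h^\ast$-basis, note that every root of $\binom{z+d-i}{d}$ lies within $d-\tfrac12$ of $-\tfrac12$, and use the bound $d\arcsin(\tfrac1d)\le\tfrac{\pi}{2}$ to force $\mathrm{Re}\bigl(z^{-d}F(z)\bigr)>0$ outside the disc of radius $d(d-\tfrac12)$. Your handling of the nonnegativity of the $h_i^\ast$ (with $h_0^\ast=1$ guaranteeing the sum is nonzero), of the no-wraparound issue for the summed arguments, and of the boundary case $d=1$ is all sound.
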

    
    By Lemma~\ref{magicproduct}, Lemma~\ref{quadratic} and Proposition~\ref{bound}, the following follows.
    
    \begin{cor}
        Let $P$ be a CL-polytope of dimension $d$. Then $\mindex(P)$ admits an upper bound that depends only on $d$.
    \end{cor}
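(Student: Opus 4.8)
The plan is to exploit the root structure forced by the CL-condition and reduce the claim to the quadratic and linear building blocks already analysed in Lemma~\ref{quadratic}. Since $P$ is a CL-polytope, every complex root of $E_P(n)$ lies on the line $\mathrm{Re}(z) = -\frac12$. Because $E_P(n)$ has real coefficients, its non-real roots occur in conjugate pairs $-\frac12 \pm b_j\sqrt{-1}$, and any real root, lying on this line, must equal $-\frac12$. First I would therefore factor
\[
E_P(n) = C\cdot (n+\tfrac12)^{\varepsilon}\prod_{j}\left((n+\tfrac12)^2 + b_j^2\right),
\]
where $C > 0$ is the leading coefficient of $E_P$ (a positive multiple of the volume), $\varepsilon \in \{0,1\}$ records the parity of $d$, and the product runs over the $\lfloor d/2\rfloor$ conjugate pairs. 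Each monic quadratic factor equals $n^2 + n + \frac14 + b_j^2$ and the linear factor equals $n + \frac12$, so after absorbing $C$ into a single factor every factor in this decomposition is a polynomial with positive real coefficients.

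Next, dilation commutes with this factorisation: $E_{kP}(n) = E_P(kn) = C\,(kn+\frac12)^{\varepsilon}\prod_j((kn+\frac12)^2 + b_j^2)$, so it suffices to make every individual factor magic positive at one common integer $k$ and then multiply. For the quadratic factor with roots $-\frac12 \pm b_j\sqrt{-1}$, Lemma~\ref{quadratic} tells us that its $k$-dilation is magic positive precisely when $k \geq \frac12 + 2b_j^2$; a direct expansion of $kn+\frac12$ in the basis $\{n+1,\, n\}$ shows the linear factor is magic positive for every $k \geq \frac12$, in particular for all positive integers.

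The uniform bound then comes from Proposition~\ref{bound}. For each root $-\frac12 + b_j\sqrt{-1}$ we have $|b_j| = |\alpha + \frac12| \leq d(d-\frac12)$, hence $b_j^2 \leq d^2(d-\frac12)^2$ and $\frac12 + 2b_j^2 \leq \frac12 + 2d^2(d-\frac12)^2 =: k_0$, a quantity depending only on $d$. Thus for every integer $k \geq \lceil k_0\rceil$ each factor of $E_{kP}(n)$ is a magic positive polynomial with positive real coefficients. Applying Lemma~\ref{magicproduct} repeatedly to these factors (each partial product again has positive real coefficients, so the hypotheses persist) shows that $E_{kP}(n)$ is magic positive, whence $\mindex(P) \leq \lceil k_0\rceil$.

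I expect the only delicate points to be bookkeeping rather than conceptual: one must verify that every factor in the chosen decomposition genuinely has positive coefficients so that Lemma~\ref{magicproduct} applies at each step, and handle the odd-dimensional linear factor separately since Lemma~\ref{quadratic} covers only the quadratic case. The main obstacle is ensuring that a \emph{single} common dilation factor $k$ works for all factors at once, which is exactly what Proposition~\ref{bound} provides by bounding every $b_j^2$ uniformly in terms of $d$.
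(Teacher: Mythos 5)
Your proposal is correct and follows essentially the same route as the paper: factor $E_P$ into the quadratic factors coming from conjugate root pairs on $\mathrm{Re}(z)=-\frac12$ (plus a linear factor in odd dimension), apply Lemma~\ref{quadratic} to each quadratic factor, bound the imaginary parts uniformly via Proposition~\ref{bound}, and combine with Lemma~\ref{magicproduct}. The only difference is that you spell out the magic positivity of the linear factor $kn+\frac12$ explicitly, which the paper leaves implicit.
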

    \begin{proof}
        For real numbers $b_1, \ldots, b_{\bigfrac{d}{2}}$, define
        \[
        f_i(x) = \left(x + \frac{1}{2}+b_i\sqrt{-1}\right)\left(x + \frac{1}{2}-b_i\sqrt{-1}\right).
        \]
        Since $P$ is a CL-polytope, $E_P(x)$ can be expressed as
        \[
        E_P(x) = b(x)\prod_{i=1}^{\bigfrac{d}{2}} f_i(x),
        \]
        where for a positive rational number $a$, $b(x) = a$ if $d$ is even and $b(x) = a(2x+1)$ otherwise. 
        By Lemma~\ref{quadratic}, each $f_i(kx)$ is magic positive for all $k \geq \frac{1}{2} + 2b_i^2$.
        Let $b = \max\{b_0, \ldots, b_{\bigfrac{d}{2}}\}$. Then, by Lemma~\ref{magicproduct}, we have $\mindex(P) \leq \left\lceil \frac{1}{2} + 2b^2 \right\rceil$. Moreover, by Proposition~\ref{bound}, since $b \leq d(d - \frac{1}{2})$, it follows that $\mindex(P) \leq \left\lceil\frac{1}{2} + 2d^2(d-\frac{1}{2})^2\right\rceil$.
    \end{proof}
    In general, there is no general upper bound for the $\mindex$ of a polytope depending only on the dimension. However, for CL-polytopes, such a bound does exist.
    
    Through the examples in Chapter 5, we see that it seems difficult to explicitly determine $\mindex(P)$ for a lattice polytope $P$.

    \begin{rem}
        By Theorem~\ref{main1}, any polynomial with positive real coefficients becomes magic positive  when the polynomial is `dilated' in a certain sense. Therefore, by \cite[Theorem~1.1]{athanasiadis2025realeulerian}, the polynomial resulting from the Eulerian transformation is real-rooted.
        Moreover, see \cite[Section~6]{petter2022eulerian} for the relationship between Ehrhart theory and the Eulerian transformation.
    \end{rem}
\bibliographystyle{plain} 
\bibliography{ref}
\end{document}